\documentclass{amsart}
\usepackage{amssymb}
\usepackage{graphicx}

\DeclareSymbolFont{AMSb}{U}{msb}{m}{n}
\DeclareSymbolFontAlphabet{\Bb}{AMSb}

\usepackage{bbm}

\usepackage{tikz}

\usetikzlibrary{intersections,calc,positioning}
\usetikzlibrary{shapes.geometric,fit}
\usetikzlibrary{decorations,arrows,decorations.pathmorphing,backgrounds,positioning,fit,petri,shadows}
\usetikzlibrary{3d}
\usepackage{yhmath}
\usepackage{mathdots}
\usepackage{MnSymbol}

\newtheorem{theorem}{Theorem}[section]
\newtheorem{lemma}[theorem]{Lemma}

\theoremstyle{remark}

\newtheorem{remark}[theorem]{Remark}

\numberwithin{equation}{section}
\begin{document}

\title[Combinatorial relations among relations]
{Combinatorial relations among relations for level 2 standard $C_{n}\sp{(1)}$-modules}
\author{Mirko Primc and Tomislav \v Siki\' c }
\address{Mirko Primc, University of Zagreb, Faculty of Science, Bijeni\v cka 30, 10000 Zagreb, \mbox{\hskip 7.6em} Croatia}
\email{primc@math.hr}
\address{Tomislav \v{S}iki\'{c}, University of Zagreb, Faculty of Electrical Engineering and Com- \mbox{\hskip 8.5em} puting, Unska 3, 10000 Zagreb,  Croatia}
\email{tomislav.sikic@fer.hr}

\subjclass[2000]{Primary 17B67; Secondary 17B69, 05A19.}

\begin{abstract}
For an affine Lie algebra $\hat{\mathfrak g}$ the coefficients of certain vertex operators which annihilate level $k$ standard $\hat{\mathfrak g}$-modules are the defining relations for level $k$ standard modules. In this paper we study a combinatorial structure of the leading terms of these relations for level $k=2$ standard  $\hat{\mathfrak g}$-modules for affine Lie algebras of type $C_{n}\sp{(1)}$ and the main result is a construction of combinatorially parameterized relations among the coefficients of annihilating fields. It is believed that the constructed relations among relations will play a key role in a construction of Groebner-like basis of the maximal ideal of the universal vertex operator algebra $V^ k_{\mathfrak g}$ for $k=2$.
\end{abstract}
\maketitle
\def\sq{{\lower.3ex\vbox{\hrule\hbox{\vrule height1.2ex depth1.2ex\kern2.4ex			\vrule}\hrule}\,}}
\section{Introduction}

Let ${\mathfrak g}$ be a simple complex Lie algebra and $\hat{\mathfrak g}$ the corresponding affine Kac-Moody Lie algebra (cf. \cite{K}). In \cite{MP1} combinatorial bases of all standard (i.e. integrable highest weight) $\hat{\mathfrak g}$-modules $L(\Lambda)$ were constructed for ${\mathfrak g}={\mathfrak sl}_2$. A part of that construction can be carried through for the vacuum $\hat{\mathfrak g}$-modules $L(k\Lambda_0)$ for all simple ${\mathfrak g}$ and all positive integer $k$. The key ingredient is the vertex operator algebra structure\footnote{
$N(k\Lambda_0)$ is often denoted as $V^k({\mathfrak g})$ or $V_{\hat{\mathfrak g}}(k,0)$ (cf. \cite{LL}), but we shall use the notation from \cite{PS1} and \cite{PS2}.
} 
on the generalized Verma $\hat{\mathfrak g}$-module $N(k\Lambda_0)$ generated by fields $$
x(z)=\sum_{m\in\mathbb Z}x(m)z^{-m-1}=Y(x(-1){\bf 1}, z), \quad x\in\mathfrak g,
$$
where $x(m)=x_m=x\otimes t^m$ is the usual notation for the elements of the affine Lie algebra and $Y(v,z)$ is the usual notation for vertex operators. We also write \newline $\mathfrak g=\mathfrak g\otimes t^0\subset\hat{\mathfrak g}$ and $\hat{\mathfrak g}_{<0}=\coprod_{m<0}{\mathfrak g}\otimes t^m$.
\smallskip

Let $R$ be the finite dimensional ${\mathfrak g}$-module  generated by the singular vector in $N(k\Lambda_0)$, i.e.
$$
R=U({\mathfrak g})\cdot x_\theta(-1)^{k+1}{\bf 1} \cong L_{\mathfrak g}((k+1)\theta),
$$
where $x_\theta$ is a root vector for the maximal root $\theta$ with respect to a chosen Cartan decomposition of ${\mathfrak g}$. Then the coefficients $r(m)$, $r\in R$, $m\in\mathbb Z$, of vertex operators
$$
Y(r,z)=\sum_{m\in\mathbb Z}r(m)z^{-m-k-1}
$$
span a loop $\hat{\mathfrak g}$-module $\bar R$. Since  $\bar RN(k\Lambda_0)\subset N(k\Lambda_0)$ is the maximal submodule of the generalized Verma module (cf. \cite{MP1}, \cite{PS1}, \cite{PS2} and the references therein) we have
\begin{equation}\label{E:relations for L(Lambda)}
	L(k\Lambda_0)=N(k\Lambda_0)/\bar RN(k\Lambda_0)
	\quad\text{and}\quad \bar R\,|_{L(k\Lambda_0)}=0,
\end{equation}
and for this reason we call the elements of $\bar R$ {\it the relations} for level $k$ standard (vacuum) $\hat{\mathfrak g}$-modules. Note that (\ref{E:relations for L(Lambda)}) is the consequence of representation theory of affine Kac-Moody Lie algebras and the associated vertex operator algebras. 

One can use the relations $\bar R$ to construct a combinatorial bases of $L(k\Lambda_0)$---the basic idea is to reduce the PBW spanning set of $L(k\Lambda_0)$ to a basis $\mathcal B$ by using relations $r|_{L(k\Lambda_0)}=0$, and to parameterize the monomial vectors 
$$
u(\pi){\bf 1}\in\mathcal B\subset L(k\Lambda_0)=U(\hat{\mathfrak g}){\bf 1}
$$ 
with monomials $\pi$ in the symmetric algebra $S(\hat{\mathfrak g})$. This is done in steps:
\begin{itemize}
	\item Choose an (appropriately) totally ordered weight basis $B$ of $\mathfrak g$ and the corresponding basis $\hat B=\bar B\cup\{c\}$ of $\hat{\mathfrak g}$, where $c$ is the canonical central element in $\hat{\mathfrak g}$ and 
	$$
	\bar B=\{b(m)\mid b\in B, m\in\mathbb Z\}.
	$$ 
	We extend the strict order $\prec$ on $B$ to $\bar B$ so that $m< m'$ implies $b(m)\prec  b'(m')$.
	Since $b(m){\bf 1}=0$ for $m\geq0$, in some arguments it is enough to consider basis elements in
	$$
	\bar B_{<0}=\{b(m)\mid b\in B, m<0\}=\bar B\cap \hat{\mathfrak g}_{<0}.
	$$
	\item Denote by $\mathcal P$ the set of monomials
	\begin{equation*}
		\pi=\prod_{b(j)\in \bar B}b(j)^{n_{b(j)}}\in S(\hat{\mathfrak g})
	\end{equation*}
	and by $\mathcal P_{<0}= \mathcal P\cap S(\hat{\mathfrak g}_{<0})$. Sometimes we say that  $\pi\in\mathcal P_{<0}$ is a colored partition of length $\ell(\pi)$,  degree $|\pi|$ 
	 and support $\text{supp\,}\pi$,
	$$
	\quad\qquad\ell(\pi)=\sum_{b(j)\in \bar B}{n_{b(j)}}, \ \ 
	|\pi|=\sum_{b(j)\in \bar B}{n_{b(j)}}\cdot j, \ \ 
	\text{supp\,}\pi =\{b(j)\in\bar B_{<0}\mid n_{b(j)}>0\},
	$$
	with colored parts ${b(j)}\in\text{supp\,}\pi$ of degree $j<0$ and color $b\in B$, appearing in the partition $n_{b(j)}$ times. For $\ell\geq0$ and $m\in\mathbb Z$ set
	$$
	\mathcal P^\ell =\{\pi\in\mathcal P\mid \ell(\pi)=\ell\},\quad
	\mathcal P^\ell(m) =\{\pi\in\mathcal P\mid \ell(\pi)=\ell, |\pi|=m\}.
	$$
	Instead of the product $\mu\nu\in S(\hat{\mathfrak g}_{<0})$ we shall write $\mu\cup\nu\in {\mathcal P}_{<0}$ saying that $\mu\cup\nu$ is the partition having all the parts of $\mu$ and $\nu$ together. Likewise we write
	$$
	\rho\subset\pi\quad\text{if}\quad \pi=\rho\kappa
	$$
	for some $\kappa$, saying that $\rho\subset\pi$ is an {\it embedding of $\rho$ into $\pi$}. We extend the total order $\prec$ on $\bar B$ to $\mathcal P$ so that $\mu\prec  \mu'$ implies $\mu\kappa\prec  \mu'\kappa$. For arguments by induction we also need that  $\prec $ on  $\mathcal P_{<0}$ is a well order and that $\ell(\pi)>\ell(\pi')$ or $|\pi|<|\pi'|$ implies $\pi\prec \pi'$.
	\smallskip
	
	\item For the filtration $U_\ell(\hat{\mathfrak g})$, $\ell\geq0$, let
	$$
	p_\ell\colon \, U_\ell(\hat{\mathfrak g})\to S^\ell(\hat{\mathfrak g})\cong
	U_\ell(\hat{\mathfrak g})/U_{\ell-1}(\hat{\mathfrak g}) 
	$$
	be the canonical projection. For each $\pi\in\mathcal P^\ell $ choose $u(\pi)\in U_\ell(\hat{\mathfrak g})$ such that $p_\ell(u(\pi))=\pi$. Then we have a PBW basis
	$$
	\{u(\pi){\bf 1} \mid \pi\in\mathcal P_{<0}\}
	$$
	of $N(k\Lambda_0)$ and the corresponding spanning set on the quotient $L(k\Lambda_0)$.
	\smallskip
	
	\item  Determine the set of leading terms $ \ell\text{\!\it t\,}(\bar R)\subset\mathcal P$ of relations $r(m)\in\bar R\,\backslash\{0\}$:
	$$
	\rho=\ell\text{\!\it t\,}(r(m)) \quad \text{if} \quad 
	r(m)=c_\rho u(\rho)+\sum_{\rho\prec \kappa}c_\kappa u(\kappa),\quad
	c_{\rho}\neq0.
	$$
	Then we can parameterize a basis of the loop module	 $\bar R$ by its leading terms, i.e. there is a basis
	$\{r(\rho)\mid\rho\in \ell\text{\!\it t\,}(\bar R)\}$ of $\bar R$ such that $\ell\text{\!\it t\,}(r(\rho))=\rho$, i.e.
	\begin{equation}\label{E:r(rho)=}
	r(\rho)= u(\rho)+\sum_{\rho\prec \kappa}c_\kappa u(\kappa).
	\end{equation}
	\item By using relations $r(\rho)|_{L(k\Lambda_0)}=0$ reduce the spanning set $\{u(\pi){\bf 1} \mid \pi\in{\mathcal P}_{<0}\}$ of $L(k\Lambda_0)$ to (hopefully\footnote{
		The obtained spanning set $\mathcal B$ is not necesseraly a basis---see \cite{MP2}.
	}) a basis
	\begin{equation*}
		\mathcal B=\{u(\pi)\cdot {\bf 1}\mid  \pi\in{\mathcal P}_{<0}\,\backslash( \ell\text{\!\it t\,}(\bar R))\}.
	\end{equation*}
	Here $ \pi\in{\mathcal P}_{<0}\,\backslash( \ell\text{\!\it t\,}(\bar R))$ denotes monomials $\pi$ which are not in the ideal $(\bar R)$ generated by relations $\bar R$. 
\end{itemize}

If we think of monomials $\pi$ as colored partitions, then the spanning set of monomial vectors $\mathcal B\subset L(k\Lambda_0)$ is parameterized by partitions which do not contain any subpartition  $\rho\in\ell\text{\!\it t\,}(\bar R)$---this is some sort of combinatorial ``difference conditions'' on parts of the partition $\pi$.

Note that the last two steps in constructing combinatorial basis depend on a choice of order $\prec $ on $\mathcal P$. In some cases the spanning set $\mathcal B$ is a basis and we have the corresponding combinatorial description of the character of $L(k\Lambda_0)$, but to prove linear independence of $\mathcal B$ might be a difficult task. One possible way to prove that $\mathcal B$ is a basis is to have the ``correct'' combinatorial character formula, obtained by some combinatorial method (like in \cite{DK} where the related conjecture in \cite{CMPP} is proved for $k=1$), or any other way. Here we pursue the idea that the correct character formula for $L(k\Lambda_0)$ can be obtained by constructing a combinatorial basis of $\bar RN(k\Lambda_0)$ because the character formula for $N(k\Lambda_0)\cong  S(\hat{\mathfrak g}_{<0})$ is obvious, and
$$
\text{ch\,}L(k\Lambda_0)=\text{ch\,}N(k\Lambda_0) -\text{ch\,}\bar RN(k\Lambda_0).
$$
In order to describe a combinatorial basis of $\bar RN(k\Lambda_0)$ we set
$$
u(\rho\subset\pi)=r(\rho)u(\kappa)
$$
for an embedding $\rho\subset\pi$ such that $\pi=\rho\kappa$,   $\rho\in\ell\text{\!\it t\,}(\bar R)$. 
Note that 
$$
r(\rho)u(\kappa)=u(\pi)+\sum_{\pi\prec \tau}c_\tau u(\tau),
$$
so we have $\ell\text{\!\it t\,}(u(\rho\subset\pi))=\pi$.
With this notation we can write the spanning set of $\bar RN(k\Lambda_0)$ as
\begin{equation}\label{E: Groebner type spanning set}
	u(\rho\subset\pi){\bf 1}, \quad \rho\in \ell\text{\!\it t\,}(\bar R), \pi\in (\ell\text{\!\it t\,}(\bar R))\cap\mathcal P_{<0}.
\end{equation}
If for any two embeddings $\rho_1\subset\pi$ and $\rho_2\subset\pi$ we have a relation
\begin{equation}\label{E: general relations among relations}
	u(\rho_1\subset\pi){\bf 1}-u(\rho_2\subset\pi){\bf 1}
	=\sum_{\pi\prec \pi', \ \rho'\subset\pi'}c_{\rho'\subset\pi'}\,u(\rho'\subset\pi'){\bf 1},
\end{equation}
then we can reduce the spanning set (\ref{E: Groebner type spanning set}) by using (\ref{E: general relations among relations}), and for each $\pi$ we may take just one embedding $\rho(\pi)\subset\pi$, $ \rho(\pi)\in \ell\text{\!\it t\,}(\bar R)$, and the corresponding vector for the reduced spanning set of $\bar RN(k\Lambda_0)$
\begin{equation}\label{E: Groebner type basis}
	u(\rho(\pi)\subset\pi){\bf 1}, \quad  \pi\in (\ell\text{\!\it t\,}(\bar R))\cap\mathcal P_{<0}.
\end{equation}
Such a spanning set is obviously linearly independent since the leading terms are the elements of the PBW basis of $N(k\Lambda_0)$, i.e.
$$
\ell\text{\!\it t\,}(u(\rho(\pi)\subset\pi){\bf 1})=u(\pi){\bf 1}.
$$
The reasoning above applies to all untwisted affine Lie algebras, so {\it relations among relations (\ref{E: general relations among relations})} (would) imply linear independence of the spanning set $\mathcal B$ of $L(k\Lambda_0)$. 
\begin{remark}
If $\pi=\rho_1\rho_2\kappa$, $\rho_1,\rho_2\in\ell\text{\!\it t\,}(\bar R)$, then we have two embeddings $\rho_1 \subset \pi$ and $\rho_2 \subset \pi$ and $\ell(\pi)\geq 2k+2$. From (\ref{E:r(rho)=}) we have
\begin{align*}
	r(\rho_1)r(\rho_2)u(\kappa)&= \big(u(\rho_1)+\sum_{\rho_1\prec \kappa_1}c_{\kappa_1} u(\kappa_1)\big)r(\rho_2)u(\kappa)\\
	&= r(\rho_1)\big(u(\rho_2)+\sum_{\rho_2\prec \kappa_2}c_{\kappa_2} u(\kappa_1)\big)u(\kappa)\\
\end{align*}	
and (\ref{E: general relations among relations}) easily follows. Hence the problem is to check  (\ref{E: general relations among relations}) ``only'' for
$$
k+2\leq\ell(\pi)\leq 2k+1.
$$

For $k=1$ we have $k+2= 2k+1=3$, i.e. we have to check  (\ref{E: general relations among relations}) only for $\ell(\pi)=3$, and this was done in \cite{PS1}.

On the other hand, for $k=2$  we have to check  (\ref{E: general relations among relations}) for $4\leq\ell(\pi)\leq 5$. The main result of this paper---the theorem below---gives (\ref{E: general relations among relations}) only for $\ell(\pi)=4$, and it is hoped that the case $\ell(\pi)=5$ will somehow follow from that\footnote{All necessary relations among relations in \cite{MP1} for all levels follow from Lemma 9.2.}.
\end{remark}

\begin{theorem}\label{T:the main theorem Introduction}
	For any two embeddings
	$\rho_1 \subset \pi$ and $\rho_2 \subset \pi$ in $\pi\in\mathcal P^{4}(m)$,
	where $\rho_1, \rho_2 \in\ell \!\text{{\it t\,}}(\bar{R})$, we
	have a level $2$ relation for $C_n^{(1)}$
	\begin{equation}\label{E:9.2}
		u(\rho_1 \subset \pi) - u(\rho_2 \subset \pi) 
		=\sum_{\pi\prec \pi', \ \rho\subset\pi'}c_{\rho\subset\pi'}\,u(\rho\subset\pi').
	\end{equation}
\end{theorem}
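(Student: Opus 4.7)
The plan is to exploit the structural identity that for $k=2$ every $\rho\in\ell\text{\!\it t\,}(\bar R)$ has length $k+1=3$, so in a partition $\pi\in\mathcal P^{4}$ the intersection $\mu=\rho_1\cap\rho_2$ has length exactly $2$. Writing $\rho_i=\mu\cup\nu_i$ with $\ell(\nu_i)=1$ gives $\kappa_i=\nu_{3-i}$, so the claim (\ref{E:9.2}) becomes the identity
\begin{equation*}
r(\rho_1)\,u(\nu_2){\bf 1} - r(\rho_2)\,u(\nu_1){\bf 1}
= \sum_{\pi\prec\pi',\ \rho\subset\pi'} c_{\rho\subset\pi'}\,u(\rho\subset\pi'){\bf 1}
\end{equation*}
in $\bar R N(2\Lambda_0)$. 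My first step would be to reduce, via the explicit description of $\ell\text{\!\it t\,}(\bar R)$ for level $2$ $C_n^{(1)}$ developed in earlier sections of the paper, to a finite case analysis on the configurations $(\mu,\nu_1,\nu_2)$, classified by the root-system data of the colors carried by the four parts, by the degrees of $\nu_1,\nu_2$ relative to $\mu$, and by the relative $\prec$-positions of these parts.

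The main algebraic tool is the $\hat{\mathfrak g}$-module structure on $\bar R$: for any $x(m)\in\hat{\mathfrak g}$ and $r\in\bar R$ one has $[x(m),r]\in\bar R$. For each configuration I would choose $x(m)\in\hat{\mathfrak g}$ so that $[x(m),r(\rho_1)]$ equals a nonzero scalar multiple of $r(\rho_2)$ modulo relations whose leading partitions are strictly $\succ\rho_2$. Commuting the single factor $u(\nu_2)$ past $r(\rho_1)$ likewise produces an element of $\bar R$ of strictly smaller PBW length together with bracket corrections, and the conventions on $\prec$ (longer length or smaller degree implying smaller) guarantee that the resulting vectors $u(\rho\subset\pi'){\bf 1}$ satisfy $\pi\prec\pi'$. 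Contributions from embeddings with $\ell(\pi')\geq 2k+2=6$ are already absorbed by the Remark preceding the theorem, so only the finitely many configurations with $\ell(\pi')\in\{4,5\}$ require explicit verification.

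The principal obstacle I expect is the casework itself, which reflects the rich structure of the $\mathfrak g$-module $R\cong L_{\mathfrak g}(3\theta)$ for $\mathfrak g$ of type $C_n$. The exchange operator $x(m)$ used to pass from $r(\rho_1)$ to $r(\rho_2)$ must be chosen so that it does not annihilate the relevant weight subspace and so that the induced action on the leading term produces a nonzero scalar; for degenerate configurations---e.g.\ when a color appearing in some $\nu_i$ coincides with one in $\mu$, or when $\nu_1$ and $\nu_2$ sit in the same degree---one typically needs ad hoc identities derived either from explicit generators of $R$ or from the Serre relations of $\hat{\mathfrak g}$. Verifying in every case that the accumulated bracket corrections do not cancel the leading monomial of the combined expression, so that the resulting identity genuinely sits above $\pi$ in the well-order $\prec$, is the delicate but mechanical bookkeeping step that completes the argument.
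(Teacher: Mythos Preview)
Your proposal takes a route entirely different from the paper's, and as written it is only a plan, not a proof.

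The paper's argument is \emph{non-constructive dimension counting}. It invokes Theorem~9.2 and Proposition~10.1 of \cite{PS1}, which supply a space $Q_4(m)=U(\mathfrak g)q_{3\theta}(m)\oplus U(\mathfrak g)q_{4\theta}(m)\oplus U(\mathfrak g)q_{4\theta-\alpha^*}(m)$ of relations among annihilating fields and reduce the theorem to the numerical identity
\[
\sum_{\pi\in\mathcal P^4(m)} N(\pi)=\dim Q_4(m).
\]
The right-hand side is computed from Weyl's dimension formula. The left-hand side is the whole point of Section~3: partitions $\pi$ admitting two embeddings are classified into the types $A_r$, $B_{r\,\delta}$, $C_{\delta\,r}$, $D_{r\,\delta\,s}$, and each $\Sigma_T$ is evaluated as an explicit iterated sum over a trapezoid $T$ of three consecutive triangles in $\bar B_{<0}$. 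Summing everything and matching against $\dim Q_4(m)$ (both are polynomials in $n$) proves the theorem for all ranks simultaneously. No individual relation (\ref{E:9.2}) is ever exhibited.

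Your approach---construct each relation explicitly via brackets $[x(m),r(\rho_1)]$---is in the spirit of Lemma~9.2 of \cite{MP1}, which the paper itself cites as the device that works for $A_1^{(1)}$; but for $C_n^{(1)}$ with $n\geq2$ this is exactly what the paper declines to attempt. The decisive casework is not carried out in your proposal, and the mechanism you describe has a gap you do not address: the bracket $[x(m),r(\rho_1)]$ lies in $\bar R$ with the \emph{same} length~$3$ as $r(\rho_1)$ (not ``strictly smaller PBW length'') and degree $m+|\rho_1|$, so even granting that a suitable $x(m)$ with $m=|\nu_2|-|\nu_1|$ hits $r(\rho_2)$ with nonzero coefficient, you must still explain how to assemble from this an identity equating $r(\rho_1)u(\nu_2)$ and $r(\rho_2)u(\nu_1)$ modulo $u(\rho'\subset\pi')$ with $\pi'\succ\pi$. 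That assembly is not automatic, and verifying it across all configuration types is precisely the labor the dimension argument is designed to replace.
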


\section{The array of negative root vectors of $C_{n}\sp{(1)}$}

We fix a simple Lie algebra $\mathfrak{g}$ of type $C_n$, $n\geq 2$, i.e.  $\mathfrak{g}=\mathfrak{sp}_{2n}(\mathbb C)$. For a given Cartan subalgebra $\mathfrak h$ and the corresponding
root system $\Delta$ we can write (as in \cite{B})
\begin{equation*}
	\Delta = \{\pm(\varepsilon_i\pm\varepsilon_j) \mid i,j=1,...,n\}\backslash\{0\}
\end{equation*}
with simple roots 
$\alpha_1= \varepsilon_1-\varepsilon_2$,  $\alpha_2=\varepsilon_2-\varepsilon_3$,  \dots  $\alpha_{n-1}=\varepsilon_{n-1}-\varepsilon_{n}$, $\alpha_n = 2\varepsilon_n$.
Then $\theta=2 \varepsilon_1$. For each $\alpha\in\Delta$ we choose a root vector $X_{\alpha}$ such that $[X_{\alpha},X_{-\alpha}]=\alpha^{\vee}$. For root vectors
$X_{\alpha}$ we shall use the following notation:
$$\begin{array}{ccc}
	X_{ij}\quad \text{or just}\quad ij &  \text{if}\   &\alpha =\varepsilon_i + \varepsilon_j\ , \ i\leq j\,,\\
	X_{\underline{i}\underline{j}}\quad \text{or just}\quad \underline{i}\underline{j} & \ \text{if}\  &\alpha =-\varepsilon_i - \varepsilon_j\ , \ i\geq j\,,\\
	X_{i\underline{j}}\quad \text{or just}\quad i \underline{j} & \ \text{if}\  &\alpha =\varepsilon_i - \varepsilon_j\ , \ i\neq j\,.\\
\end{array}
$$
With the previous notation $x_\theta=X_{11}$. We also write for $i=1, \dots, n$
$$
X_{i\underline{i}}=\alpha_i^{\vee}\ \text{or just}\ i\underline{i} \,.
$$
These vectors $X_{ab}$ form a basis $B$ of $\mathfrak g$ which we shall write in a triangular scheme. For example, for $n=3$ the basis $B$ is
\begin{center}
	\begin{tikzpicture} [scale=0.7]
		\node at (0,0) {$11$};\node at (2,0) {$22$};\node at (4,0) {$33$};
		\node at (6,0) {$\underline{3}\underline{3}$};\node at (8,0) {$\underline{2}\underline{2}$};\node at (10,0) {$\underline{1}\underline{1}$};
		\node at (1,1) {$12$};\node at (3,1) {$23$};\node at (5,1) {$3\underline{3}$};
		\node at (7,1) {$\underline{3}\underline{2}$};	\node at (9,1) {$\underline{2}\underline{1}$};
		\node at (2,2) {$13$};\node at (4,2) {$2\underline{3}$};\node at (6,2) {$3\underline{2}$};\node at (8,2) {$\underline{3}\underline{1}$};
		\node at (3,3) {$1\underline{3}$};\node at (5,3) {$2\underline{2}$};\node at (7,3) {$3\underline{1}$};
		\node at (4,4) {$1\underline{2}$};	\node at (6,4) {$2\underline{1}$};
		\node at (5,5) {$1\underline{1}$};
	\end{tikzpicture}
\end{center}	
\begin{center}
	Figure 1
\end{center}
In order to simplify counting of embeddings of leading terms of relations, we will use  the usual matrix notation for the basis $B$, i.e. we will use $i=1,\dots, 2n$ for the first index for rows and $j=1,\dots, 2n$ for the second index for columns/diagonals. 
 For example, for $n=3$ the basis $B$ is
\begin{center}
	\begin{tikzpicture} [scale=0.7]
		\node at (0,0) {$11$};\node at (2,0) {$12$};\node at (4,0) {$13$};
		\node at (6,0) {$14$};\node at (8,0) {$15$};\node at (10,0) {$16$};
		\node at (1,1) {$21$};\node at (3,1) {$22$};\node at (5,1) {$23$};
		\node at (7,1) {$24$};	\node at (9,1) {$25$};
		\node at (2,2) {$31$};\node at (4,2) {$32$};\node at (6,2) {$33$};\node at (8,2) {$34$};
		\node at (3,3) {$41$};\node at (5,3) {$42$};\node at (7,3) {$43$};
		\node at (4,4) {$51$};	\node at (6,4) {$52$};
		\node at (5,5) {$61$};
	\end{tikzpicture}
\end{center}	
\begin{center}
	Figure 2
\end{center}
\bigskip

\noindent
Moreover, we shall write $\bar{B}_{<0}=\coprod_{j>0}{B}\otimes t^{-j}$ in 
the  following scheme
\bigskip

\begin{center}
	\begin{tikzpicture} [scale=0.5]
		\draw (0,0) -- +(10,0) -- +(5,5) -- cycle;
		\node at (5,2) {$B\otimes t^{-1}$};
		\node at (0.8,0.3) {$11$};\node at (8.7,0.3) {$1,2n$};\node at (5,4) {$2n,1$};
		\draw (10.5,0.5) -- +(-5,5) -- +(5,5) -- cycle;
		\node at (11,3.5) {$B\otimes t^{-2}$};
		\node at (10.5,1.3) {$2,2n$};\node at (7,5) {$2n+1,1$};\node at (14,5) {$2n+1,2n$};
		\draw (11,0) -- +(10,0) -- +(5,5) -- cycle;
		\node at (16,2) {$B\otimes t^{-3}$};
		\node at (13,0.3) {$1,2n+1$};\node at (19.5,0.3) {$1,4n$};\node at (16,4) {$2n,2n+1$};
		\node[fill=black, circle, inner sep=1pt] at (20.5,3){};\node[fill=black, circle, inner sep=1pt] at (21,3){};\node[fill=black, circle, inner sep=1pt] at (21.5,3){};
	\end{tikzpicture}
	\end{center}	
\begin{center}
	Figure 3
\end{center}	
which we call {\it the array of negative root vectors of $C_{n}\sp{(1)}$}.
\bigskip

By the main theorem in \cite{PS2}, {\it the monomial
\begin{equation}\label{E:12.12}
	\rho=\prod_{\beta \in \mathcal{B}} X_{\beta}(-j-1)^{m_{\beta,j+1}}\ \prod_{\alpha \in \mathcal{A}} X_{\alpha}(-j)^{m_{\beta,j}}
\end{equation}
is the leading term of the relation $r(\rho)\in\bar R$ for level $k$ standard modules of affine Lie algebra of the type $C_n\sp{(1)}$, where  
\begin{equation}\label{E:12.11}
	\sum_{\beta\in\mathcal{B}}m_{\beta,j+1} =b\quad , \quad \sum_{\alpha\in\mathcal{A}}m_{\alpha,j} =a\quad , \quad	a+b = k+1
\end{equation}
and 
$\mathcal{B}$ and $\mathcal{A}$ are the admissible pair of cascades in degree $-j-1$ and $-j$.}

For a general rank we may visualize the admissible pair of cascades $\mathcal{B}$ and $\mathcal{A}$ as on Figure 4 (see Figure 1 in  \cite{PS2}).
\begin{figure}[h]
	\begin{center}
		\begin{tikzpicture} [scale=0.5]
			\draw (7,0) -- +(0,7) -- +(7,0) -- cycle;
			\draw (7,3) -- (11,3) -- (11,0);
			\draw[red] (9,5) -- +(-0.5,0) -- +(-0.5,-1) -- + (-1.5,-1) -- + (-1.5,-1.5) -- +(-2,-1.5) -- (7,3);
			\node[fill=red, circle, inner sep=2pt](a) at (9,5) {};
			\node[fill=red, circle, inner sep=2pt](a) at (7,3) {};
			\draw[blue] (12,2) -- +(0,-0.5) -- +(-0.5,-0.5) -- + (-0.5,-1.5) -- + (-0.75,-1.5) -- +(-0.75,-2) -- (11,0);
			\node[fill=blue, circle, inner sep=2pt](a) at (12,2) {};
			\node[fill=blue, circle, inner sep=2pt](a) at (11,0) {};
			\node at (11.5,3) {(r,r)};
			\node[red] at (8,4.5) {$\mathcal B$};
			\node[blue] at (12,1) {$\mathcal A$};
		\end{tikzpicture}
	\end{center}
	Figure 4
\end{figure}

With our new way of writing $\bar{B}_{<0}=\coprod_{m<0}{B}\otimes t^m$ on Figure 3, we may reinterpret the pair of admissible cascades $(\mathcal B, \mathcal A)$ on Figure 4 as the
downward zig-zag line on Figure 5.

\begin{figure}[h]
\begin{center}
	\begin{tikzpicture} [scale=0.5, rotate around={45:(7,3.5)}]
		\draw (7,7) -- +(-7,0) -- +(0,-7) -- cycle;
		\draw (4,7) -- (4,3) -- (7,3);
		\draw[blue] (5,2) -- +(0.5,0) -- +(0.5,0.5) -- + (1.5,0.5) -- + (1.5,0.75) -- +(2,0.75) -- (7,3);
		\node[fill=blue, circle, inner sep=2pt] at (5,2) {};
		\draw (7,0) -- +(0,7) -- +(7,0) -- cycle;
		\draw (7,3) -- (11,3) -- (11,0);
		\draw[red] (9,5) -- +(-0.5,0) -- +(-0.5,-1) -- + (-1.5,-1) -- + (-1.5,-1.5) -- +(-2,-1.5) -- (7,3);
		\node[fill=red, circle, inner sep=2pt] at (9,5) {};
		\node[fill=red, circle, inner sep=3pt] at (7,3) {};
		\node[fill=blue, circle, inner sep=2pt] at (7,3) {};
		\node[red] at (8,4.5) {$\mathcal B$};
		\node[blue] at (6,2) {$\mathcal A$};
	\end{tikzpicture}
	
	Figure 5
\end{center}
\end{figure}
Note that Figure 5 consists of the triangle $\Delta$ on Figure 4 together with its mirror image $\Delta'$ with respect to the hypotenuse, but translated along a pair of parallel sides of $\Delta$ and $\Delta'$ so that the other two sides coincide. Of course, the obtained parallelogram is rotated for $\pi/4$.

On $\bar B_{<0}$, written as on Figure 3, we introduce a partial order $\trianglelefteq$ by

\begin{equation}\label{partial order}
	X_{i,j}\trianglelefteq X_{p,r} \quad \text{if}\quad i\in\{1,\dots , p\}\quad \text{and}\quad j\in\{r,p+r-i\}.	
\end{equation}
\bigskip

\noindent
Anotherwords, $b=X_{i,j}$ lies in the cone bellow the vertex $a=X_{p,r}$, as depicted on Figure 6 below:
\begin{center}
	\begin{tikzpicture} [scale=0.5]
		\draw[dashed] (0,0) -- +(10,0) -- +(5,5) -- cycle;
		\draw[dashed] (10.5,0.5) -- +(-5,5) -- +(5,5) -- cycle;
		\draw[dashed] (11,0) -- +(10,0) -- +(5,5) -- cycle;
		\draw (-0.8,-0.3) -- +(22.6,0) --+(16.5,6.2) --+(6.1,6.2) -- cycle;
		
		\node at (9,4) [circle, draw, inner sep=4pt] (A)  {a};
		\node at (10,2) [circle, draw, inner sep=3pt] (B)  {b};
		\draw (5.5,0) -- (A) -- (12.5,0);
		\node at (11.5,3.2) {$b \trianglelefteq a$};
	\end{tikzpicture}\\
	Figure 6
\end{center}
\bigskip

\noindent
With this conventions we can restate the description of leading terms $\rho$ of relations $r(\rho)\in\bar R$  in \cite{PS2} (cf. \cite{P}) as: {\it for any zig-zag downward line
$$
a_1\vartriangleright a_2\vartriangleright\dots \vartriangleright a_r
$$
of $r$ points in $\bar B_{<0}$, $1\leq r\leq k+1$, the monomial
\begin{equation}\label{E:the leading terms of relations}
\rho=a_1^{m_1}a_2^{m_2}\dots a_r^{m_r}, \quad m_1+m_2+\dots+m_r=k+1,
\end{equation}
is the leading term of the relation $r(\rho)\in\bar R$ for level $k$ standard modules of affine Lie algebra of the type $C_n\sp{(1)}$. Moreover, these are all leading terms of $\bar R$ in $\mathcal P_{<0}$.}
\bigskip

In the next section we shall count the number of embeddings $\rho\subset\pi$ for $\text{supp\,}\pi $ in three successive triangles, i.e. in the trapezoid\footnote{Note that the position of the trapezoid in Figure 6 is in accord with Figure 3 only when the middle triangle is $B\otimes t^j$ for $j$ even, and for $j$ odd the figure should be flipped. However, in our arguments this will make no difference because the flipped zig-zag downward line is again a zig-zag downward line.} $T$ on Figure 6.
\bigskip

\section{Colored partitions allowing at least two embeddings }

For a colored partition
\begin{equation}\label{E:colored partition}
\pi=\prod\sb{a\in\bar B_{<0}} a^{\pi(a)}
\end{equation}
we have  $|\text{supp\,}\pi |\leq \ell(\pi)$. The basis $\bar B_{<0}$ is writen as the array of $2n+1$ rows and $a_1\vartriangleright\dots\vartriangleright a_r$ implies $r\leq 2n+1$.
\begin{lemma}\label{L: klasifikacija dva ulaganja}
Let $\ell(\pi)=k+2$ and assume that $\pi$ allows two embeddings of leading terms of relations for level $k$ standard modules. Then $\text{supp\,}\pi $ is one of the following types (for $r, s\in\mathbb N$ and $\delta$ in the set of two symbols $\vert$ and $\vert\vert$):
\begin{itemize}
\item[$(A_{r})$] $\text{supp\,}\pi =\{a_1, \dots, a_r\}$, $r\geq 2$, $a_1\vartriangleright\dots\vartriangleright a_r$.
\item[$(B_{r\,\delta})$] $\text{supp\,}\pi =\{a_1, \dots, a_r,b,c\}$, $r\geq 1$, $a_1\vartriangleright\dots\vartriangleright a_r$, $a_r\vartriangleright b$, $a_r\vartriangleright c$ and $b$ and $c$ are not comparable. We set $\delta$ to be $\vert$  if $b$ and $c$ are in the same row, and $\vert\vert $ otherwise.
\item[$(C_{\delta\, r})$] $\text{supp\,}\pi =\{b,c,a_1, \dots, a_r\}$, $r\geq 1$, $a_1\vartriangleright\dots\vartriangleright a_r$, $b\vartriangleright a_1$, $c\vartriangleright a_1$ and $b$ and $c$ are not comparable. We set $\delta$ to be $\vert$  if $b$ and $c$ are in the same row, and $\vert\vert $ otherwise.
\item[$(D_{r \delta\, s})$] $\text{supp\,}\pi =\{a_1, \dots, a_r,b,c,d_1,\dots,d_s\}$, $r,s\geq 1$, $a_1\vartriangleright\dots\vartriangleright a_r$, $a_r\vartriangleright b\vartriangleright d_1$, $a_r\vartriangleright c\vartriangleright d_1$, $d_1\vartriangleright\dots\vartriangleright d_s$, and $b$ and $c$ are not comparable. We set $\delta$ to be $\vert$  if $b$ and $c$ are in the same row, and $\vert\vert $ otherwise.
\end{itemize}
\end{lemma}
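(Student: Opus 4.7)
The plan is to leverage the length identity $\ell(\pi) - \ell(\rho_i) = 1$: each embedding $\rho_i \subset \pi$ factors as $\pi = \rho_i \cdot b_i$ for a single colored part $b_i \in \bar{B}_{<0}$, and by the description (\ref{E:the leading terms of relations}) of leading terms the support of $\rho_i$ must be a $\vartriangleright$-chain. Two distinct embeddings correspond to two distinct factorizations, i.e.\ $b_1 \neq b_2$. I would then split the argument on whether $\text{supp\,}\pi$ is itself a chain, in which case the $b_i$ are essentially free, or not, in which case they are tightly constrained.

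If $\text{supp\,}\pi$ is a $\vartriangleright$-chain, then I immediately obtain type $(A_r)$ with $r = |\text{supp\,}\pi|$; in this case every $b \in \text{supp\,}\pi$ produces a valid embedding $\rho = \pi/b$, so two distinct embeddings force $r \geq 2$ (a partition supported on a single element $a$ has only the embedding $\rho = a^{k+1}$).

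Otherwise, the first key observation is that $\text{supp\,}\rho_i = \text{supp\,}\pi \setminus \{b_i\}$ is a chain, which forces $n_{b_i}(\pi) = 1$: indeed, if $b_i$ had multiplicity at least $2$, then $b_i$ would remain in $\text{supp\,}\rho_i$, making $\text{supp\,}\rho_i = \text{supp\,}\pi$, contradicting the assumption that $\text{supp\,}\pi$ is not a chain. Writing $E = \text{supp\,}\pi \setminus \{b_1, b_2\}$, every $e \in E$ sits in both chains $\text{supp\,}\rho_1$ and $\text{supp\,}\rho_2$, so $E$ is a $\vartriangleright$-chain and every $e \in E$ is comparable to each of $b_1, b_2$. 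Moreover, $b_1$ and $b_2$ themselves must be incomparable; otherwise $\text{supp\,}\pi = E \cup \{b_1, b_2\}$ would again be a chain.

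It remains to split the chain $E$ relative to $\{b_1, b_2\}$. Transitivity of $\vartriangleright$ rules out any $e \in E$ satisfying $e \vartriangleright b_1$ and $e \vartriangleleft b_2$, since this would give $b_2 \vartriangleright b_1$. Hence the ``upper'' elements (above both $b_1, b_2$) and the ``lower'' elements (below both) partition $E$, and a second transitivity argument along the chain $E$ shows that the upper elements form an initial segment $a_1 \vartriangleright \cdots \vartriangleright a_r$ and the lower ones a final segment $d_1 \vartriangleright \cdots \vartriangleright d_s$. Setting $b = b_1$, $c = b_2$, the four cases $r,s \geq 1$; $r \geq 1, s = 0$; $r = 0, s \geq 1$; $r = s = 0$ give exactly types $(D_{r\delta s})$, $(B_{r\delta})$, $(C_{\delta s})$ and the degenerate case $\text{supp\,}\pi = \{b, c\}$, which is ruled out by $\ell(\pi) = k+2 \geq 3$ combined with $n_b(\pi) = n_c(\pi) = 1$. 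The label $\delta \in \{\vert, \vert\vert\}$ is simply a refinement based on whether $b$ and $c$ sit in the same row and requires no further argument. The main obstacle is carrying out the transitivity arguments cleanly against the geometric partial order (\ref{partial order}); once those are in place, the classification reduces to pure bookkeeping.
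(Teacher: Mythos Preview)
Your proposal is correct and follows essentially the same approach as the paper: both arguments factor $\pi = b_1\rho_1 = b_2\rho_2$ with $b_1\neq b_2$, use that each $\text{supp}\,\rho_i$ is a $\vartriangleright$-chain, and then classify according to the comparability of $b_1$ and $b_2$ and their position relative to the remaining chain. The only organizational difference is that the paper lists the parts of $\rho_1\cap\rho_2$ with multiplicity as $a_1\trianglerighteq\dots\trianglerighteq a_t$ and inserts $c_1,c_2$ at positions $p,q$ (splitting on $q<p$ versus $q=p$), whereas you first split on whether $\text{supp}\,\pi$ is a chain and in the non-chain case work directly with the set $E=\text{supp}\,\pi\setminus\{b_1,b_2\}$; both routes are short and lead to the same case analysis.
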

\begin{proof}
Let $\pi =a_1^{f_1}\dots a_r^{f_r}$, $f_1\cdots f_r>0$, $f_1+\dots +f_r=k+2$, $a_1\vartriangleright\dots\vartriangleright a_r$. If in $\pi$ we erase one factor $a_j$, $j\in\{1,\dots,r\}$, we get $r$ leading terms of relations
$$
\rho_1=a_1^{f_1-1}\dots a_r^{f_r}, \quad\dots\quad \rho_r=a_1^{f_1}\dots a_r^{f_r-1}
$$
and we have $r$ embeddings
$\rho_1\subset \pi$, \dots, $\rho_r\subset \pi$. In particular, for $r\geq 2$ we have at least two embeddings in $\pi$.

Now let  $\ell(\pi)=k+2$ and let $\rho_1\subset \pi$ and $\rho_2\subset \pi$ be two different embeddings of leading terms of relations,   $\ell(\rho_1)=\ell(\rho_2)=k+1$. Then there are parts $c_1$ and  $c_2$ of $\pi$ such that
$$
\pi=c_1\rho_1=c_2\rho_2=c_1c_2(\rho_1\cap\rho_2),
$$
and $\rho_1\neq\rho_2$ implies  $c_1\neq c_2$. Hence 
$$
\rho_1=c_2(\rho_1\cap\rho_2), \quad \rho_2=c_1(\rho_1\cap\rho_2).
$$
Since $\rho_1$ and $\rho_2$ are the leading terms of relations, (\ref{E:the leading terms of relations}) implies that the partition $\rho_1\cup\rho_2$ can be written as 
$$
  a_1\trianglerighteq\dots\trianglerighteq a_t
$$
and  $\rho_1$ and $\rho_2$ as
\begin{equation}\label{E: ro1 i ro2 u klasifikaciji}
\begin{aligned}
\rho_1&\colon \quad  a_1\trianglerighteq\dots\trianglerighteq a_p \trianglerighteq c_2\trianglerighteq a_{p+1} \trianglerighteq\dots\trianglerighteq a_t,\\
\rho_2&\colon \quad  a_1\trianglerighteq\dots\trianglerighteq a_q \trianglerighteq c_1\trianglerighteq a_{q+1} \trianglerighteq\dots\trianglerighteq a_t.
\end{aligned}
\end{equation}
If $q<p$, then 
$$
 a_q \trianglerighteq c_1\trianglerighteq a_{q+1} \trianglerighteq\dots\trianglerighteq c_2
$$
and $\text{supp\,}\pi $ is of the type $A_{r}$ for some $r\geq2$.

If $q=p$ and $c_1$ and $c_2$ are comparable, say $c_2\vartriangleright c_1$, then $ a_p \trianglerighteq c_2\trianglerighteq a_{p+1}$ and $ a_p \trianglerighteq c_1\trianglerighteq a_{p+1}$ implies
$$
 a_p \trianglerighteq c_2\vartriangleright c_1\trianglerighteq a_{p+1},
$$
and again  $\text{supp\,}\pi $ is of the type $A_{r}$ for some $r\geq2$. Hence the lemma follows and  $\text{supp\,}\pi $ is of the type $B_{r\,\delta}$, $C_{\delta\, r}$ or $D_{r \delta\, s}$, depending on the position of incomparable parts $c_1$ and $c_2$ in $\rho_1$ and $\rho_2$ written as (\ref{E: ro1 i ro2 u klasifikaciji}) for $p=q$.

The distinction, and the notation of relative position of parts $c_1$ and $c_2$, i.e. $\delta = ||$ if  $c_1$ and $c_2$ are in different rows, and $\delta = |$ otherwise, will be convenient later on.
\end{proof}

\begin{lemma}\label{L: broj particija za razna ulaganja}
(i) Let $2\leq r\leq \min\{2n+1, k+2\}$. Then for any choice of $a_1\vartriangleright\dots\vartriangleright a_r$ there is 
$$
{k+1\choose r-1}
$$ 
partitions $\pi$ of the form
\begin{equation}\label{E: broj particija za isti nosac}
\pi =a_1^{f_1}\dots a_r^{f_r},\qquad f_1\cdots f_r>0, \quad f_1+\dots +f_r=k+2
\end{equation}
and for each such $\pi$ there are $r$ embeddings $\rho_1\subset \pi$, \dots, $\rho_r\subset \pi$ for
$$
\rho_1=a_1^{f_1-1}\dots a_r^{f_r}, \quad\dots\quad \rho_r=a_1^{f_1}\dots a_r^{f_r-1}.
$$

(ii) Let $r,s\geq0$, $3\leq r+2+s \leq  k+2$, $3\leq r+a+s \leq  2n+1$ with $a=1$ if $\delta$ is $|$ and $a=2$ if $\delta$ is $||$. For any choice of $\{a_1,\dots,a_r, c,d,b_1,\dots,b_s\}$ of the type $B_{r\,\delta}$ (for $s=0$), $C_{\delta\, s}$ (for $r=0$) or $(D_{r \delta\, s})$ there is 
$$
{k-1\choose s+r-1}
$$ 
partitions $\pi$ of the form
\begin{equation}\label{E: broj particija za isti nosac}
\pi =a_1^{f_1}\dots a_r^{f_r}cd\, b_1^{g_1}\dots b_s^{g_s},\quad 
f_1\cdots f_r g_1\cdots g_s>0, \  f_1+\dots +f_r+ g_1+\dots +g_s=k,
\end{equation}
and for each such $\pi$ there are $2$ embeddings $\rho_1\subset \pi$ and $\rho_2\subset \pi$ for
$$
\rho_1=a_1^{f_1}\dots a_r^{f_r}c\, b_1^{g_1}\dots b_s^{g_s}, \quad\quad
\rho_2=a_1^{f_1}\dots a_r^{f_r}d\, b_1^{g_1}\dots b_s^{g_s}.
$$
\end{lemma}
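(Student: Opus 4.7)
The plan is to reduce both parts to a straightforward stars-and-bars count, after pinning down the shape of the admissible partitions; the only nontrivial preliminary observation is that the two incomparable parts in part (ii) must have multiplicity exactly one.

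For part (i) I would fix the chain $a_1 \vartriangleright \cdots \vartriangleright a_r$ and note that a partition supported on this chain of total length $k+2$ is exactly a positive composition $(f_1,\ldots,f_r)$ of $k+2$ into $r$ parts, whose number is $\binom{k+1}{r-1}$. Then, for each such $\pi$ and each $j$, I would verify that reducing $f_j$ by one produces a length-$(k+1)$ partition $\rho_j$ whose support is either the original chain (when $f_j>1$) or the chain with $a_j$ deleted (when $f_j=1$); in the latter case transitivity of the strict partial order $\vartriangleright$ on $\bar B_{<0}$ ensures $a_{j-1}\vartriangleright a_{j+1}$, so the reduced sequence is again a zig-zag downward line. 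By the characterization of leading terms in (\ref{E:the leading terms of relations}), each $\rho_j$ is a leading term, giving the $r$ embeddings.

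For part (ii) the key preliminary is that in support types $B_{r\delta}$, $C_{\delta s}$, $D_{r\delta s}$ the incomparable parts $c$ and $d$ each have multiplicity exactly one in $\pi$. This follows by the same device used in the proof of Lemma \ref{L: klasifikacija dva ulaganja}: writing $\pi = c_1\rho_1 = c_2\rho_2$ with $c_1=c$, $c_2=d$, the chain-support of $\rho_i$ together with the incomparability of $c_1,c_2$ forces $c_1\notin\mathrm{supp}\,\rho_1$ and $c_2\notin\mathrm{supp}\,\rho_2$, whence $c$ and $d$ contribute exactly one each to $\pi$. With this, $\ell(\pi)=k+2$ gives $\sum f_i + \sum g_j = k$ with all $f_i,g_j\geq1$, so stars-and-bars yields $\binom{k-1}{r+s-1}$ partitions. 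The geometric constraints $r+a+s\leq 2n+1$ and $r+2+s\leq k+2$ only enforce that the configuration fits into the array and that there are enough units of length to distribute.

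Finally, I would check that exactly two embeddings occur: erasing $c$ (resp.\ $d$) removes the part entirely and leaves the chain-support $\{a_1,\ldots,a_r,d,b_1,\ldots,b_s\}$ (resp.\ $\{a_1,\ldots,a_r,c,b_1,\ldots,b_s\}$), which is a zig-zag downward line by the definition of the support types, hence a leading term. Any other reduction (removing one copy of some $a_i$ or $b_j$) keeps both $c$ and $d$ in the support, so the resulting support contains an incomparable pair and cannot be a chain; thus no further embeddings exist. I expect no serious obstacle here: the combinatorics is entirely routine once the multiplicity-one observation is made, and that observation is just a reuse of the argument already present in Lemma \ref{L: klasifikacija dva ulaganja}.
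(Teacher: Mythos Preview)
Your proposal is correct and follows essentially the same stars-and-bars approach as the paper, which disposes of (i) in one sentence (``there is ${k+1\choose r-1}$ ways of writing $f_1+\dots+f_r=k+2$'') and declares (ii) similar. Your write-up is considerably more detailed, in particular you spell out why exactly $r$ (resp.\ exactly $2$) embeddings occur, which the paper leaves implicit.

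One small remark: your ``key preliminary'' that $c$ and $d$ must have multiplicity one is not actually needed for the lemma as stated, since the form $\pi = a_1^{f_1}\cdots a_r^{f_r}\,cd\,b_1^{g_1}\cdots b_s^{g_s}$ is already given in the hypothesis with $c,d$ appearing once. What you are really doing there is re-deriving part of the classification in Lemma~\ref{L: klasifikacija dva ulaganja} to explain \emph{why} the statement takes this shape; that is useful context but not part of the proof proper. Once the form of $\pi$ is accepted as given, the count is immediate from compositions, and the embedding count follows from your (correct) observation that deleting any $a_i$ or $b_j$ leaves both incomparable points in the support.
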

\begin{proof} Since there is ${k+1\choose r-1}$ ways of writing $f_1+\dots +f_r=k+2$,  $f_1\cdots f_r>0$, the first statement is clear, and the second statement is obvious since $\ell(\rho)=k+1$. This proves (i), and the proof of (ii) is similar.
\end{proof}

As in \cite{PS1} for a colored partition $\pi$ set
$$
N(\pi)=\max\{\#\mathcal E(\pi)-1,0\}\quad \mathcal E(\pi)=\{\rho\in \ell\text{\!\it t\,}(\bar R)\mid \rho\subset\pi\}.
$$
Let $T$ be a trapezoid consisting of three consecutive triangles in the array ${\bar B}_{<0}$.
Set
$$
\Sigma_T(A_r)=\sum_{S\subset T,\ S\text{ is of the type }A_r}1,\qquad
N_T(A_r)=\sum_{\pi, \, \text{supp\,}\pi\subset T, \,\text{supp\,}\pi\text{ is of the type }A_r}N(\pi),
$$
and likewise for types  $B_{r\,\delta}$, $C_{\delta\, r}$ and $D_{r \delta\, s}$. 

Note that two diferent orientations for trapezoids are possible: with a long base down, say $T$, and with the long base up, say $T'$. however we have an ``up-down symmetry'' so that $\Sigma_T(A_r)=\Sigma_{T'}(A_r)$ and
$$
\Sigma_{T}(B_{r\,\delta})=\Sigma_{T'}(C_{\delta\, r}), \quad 
\Sigma_{T}(C_{\delta\, r})=\Sigma_{T'}(B_{r\,\delta}), \quad 
\Sigma_{T}(D_{r \,\delta\, s})=\Sigma_{T'}(D_{s\,\delta\, r}),
$$
so we shall consider only a trapezoid oriented as $T$. clearly Lemma \ref{L: broj particija za razna ulaganja} implies:
\begin{lemma}\label{L: broj potrebnih relacija} For $r$ and $s$ as in Lemma \ref{L: broj particija za razna ulaganja} we have:
\begin{enumerate}
\item $N_T(A_r)=(r-1){k+1\choose r-1}\,\Sigma_T(A_r)$,
\item $N_T(B_{r\,\delta})={k-1\choose r-1}\,\Sigma_{T}(B_{r\,\delta})$,
\item $N_T(C_{\delta\, r})={k-1\choose r-1}\,\Sigma_{T}(C_{\delta\, r})$,
\item $N_T(D_{r \,\delta\, s})={k-1\choose s+r-1}\,\Sigma_{T}(D_{r \,\delta\, s})$.
\end{enumerate}
\end{lemma}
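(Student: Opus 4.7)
The plan is to compute $N(\pi) = \#\mathcal{E}(\pi) - 1$ for each of the four types of support classified in Lemma \ref{L: klasifikacija dva ulaganja}, and then sum using the partition counts supplied by Lemma \ref{L: broj particija za razna ulaganja}.

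For type $A_r$, I would first verify that $\#\mathcal{E}(\pi) = r$ exactly. Any embedding $\rho \subset \pi$ with $\ell(\rho) = k+1$ is obtained from $\pi = a_1^{f_1}\cdots a_r^{f_r}$ by deleting a single factor. Since $a_1 \vartriangleright \dots \vartriangleright a_r$ is already a zig-zag downward line, each of the $r$ possible deletions (one copy of some $a_j$) produces a partition whose support still lies on a zig-zag downward line, hence a leading term by (\ref{E:the leading terms of relations}); conversely any other monomial $\rho \subset \pi$ would have length $<k+1$. Thus $N(\pi) = r-1$, and multiplying by the number $\binom{k+1}{r-1}$ of partitions with a given type-$A_r$ support, and then summing over the $\Sigma_T(A_r)$ choices of such support in $T$, gives part (1).

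For the types $B_{r\delta}$, $C_{\delta r}$, and $D_{r\delta s}$, the support of $\pi$ contains a pair of incomparable parts $c$ and $d$, each occurring in $\pi$ with multiplicity one (by the form of $\pi$ in Lemma \ref{L: broj particija za razna ulaganja}(ii)). The only embeddings $\rho \subset \pi$ with $\ell(\rho) = k+1$ come from deleting either the single $c$ or the single $d$: deleting any other factor (an $a_j$ or a $b_j$) leaves both $c$ and $d$ in the support of $\rho$, and since a leading term of a relation is supported on a single zig-zag downward line (which cannot contain two mutually incomparable elements), no such $\rho$ is a leading term. Hence $\#\mathcal{E}(\pi) = 2$ and $N(\pi) = 1$. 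Combining with Lemma \ref{L: broj particija za razna ulaganja}(ii) yields parts (2), (3) and (4).

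The only non-bookkeeping step is the observation that in the $B$, $C$, $D$ cases, deleting a part from the chain portion of the support does not produce a leading term. Fortunately this is immediate from the zig-zag description of $\ell\text{\!\it t\,}(\bar{R})$ recalled in the previous section, so no genuine obstacle appears; the rest is an application of Lemma \ref{L: broj particija za razna ulaganja}.
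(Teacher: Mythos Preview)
Your argument is essentially what the paper intends (the paper itself only asserts that the lemma follows ``clearly'' from Lemma~\ref{L: broj particija za razna ulaganja}), and your treatment of type $A_r$ is complete.

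There is, however, a small gap in the $B$, $C$, $D$ cases. The sum defining $N_T(B_{r\,\delta})$ runs over \emph{all} $\pi$ with $\ell(\pi)=k+2$ and support of type $B_{r\,\delta}$, not only over those of the special form displayed in Lemma~\ref{L: broj particija za razna ulaganja}(ii); a priori the incomparable parts $c,d$ could occur in $\pi$ with multiplicity $\geq 2$, so you cannot invoke that lemma to conclude they have multiplicity one. What you should add is the observation that if either incomparable part has multiplicity $\geq 2$, then deleting any single factor (including a copy of $c$ or $d$) still leaves both $c$ and $d$ in the support, whence $\#\mathcal E(\pi)\leq 1$ and $N(\pi)=0$. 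With that remark, only the partitions of the form in Lemma~\ref{L: broj particija za razna ulaganja}(ii) contribute to $N_T$, and your count goes through unchanged.
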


\begin{lemma}\label{L: suma za tip A} For $2\leq r\leq 2n+1$ we have:
$$
\begin{aligned}
\Sigma_T(A_r)&=\sum_{i_1=r}^{2n+1}\sum_{j_1=1}^{4n+1-i_1}\sum_{i_2=r-1}^{i_1-1}\sum_{j_2=j_1}^{j_1+i_1-i_2}
\dots\sum_{i_{p}=r-p+1}^{i_{p-1}-1}\sum_{j_{p}=j_{p-1}}^{j_{p-1}+i_{p-1}-i_{p}}\dots
\sum_{i_{r}=1}^{i_{r-1}-1}\sum_{j_{r}=j_{r-1}}^{j_{r-1}+i_{r-1}-i_{r}}1\\
&=\sum_{i_1=r}^{2n+1}\sum_{i_2=r-1}^{i_1-1}\cdots\sum_{i_{p}=r-p+1}^{i_{p-1}-1}
\dots\sum_{i_{r}=1}^{i_{r-1}-1}\pi(i_1,\dots,i_r),\\
&\pi(i_1,\dots,i_r)=(4n+1-i_1)(i_1-i_2+1)\cdots(i_{p-1}-i_{p}+1)\cdots(i_{r-1}-i_{r}+1).
\end{aligned}
$$
\end{lemma}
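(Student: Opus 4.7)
The plan is to parameterize each chain $a_1\vartriangleright a_2\vartriangleright\dots\vartriangleright a_r$ in $T$ by the matrix coordinates $a_p=X_{i_p,j_p}$ of Figures 2 and 3, translate all constraints into linear inequalities on these integers, and then enumerate. First I would verify that the trapezoid $T$ of three consecutive triangles in the array $\bar B_{<0}$ is precisely the set
$$
T=\{X_{i,j}\mid 1\leq i\leq 2n+1,\ 1\leq j\leq 4n+1-i\}.
$$
This is a row-by-row check: at row $i$ the three triangles contribute $j$-intervals $\{1,\dots,2n+1-i\}$, $\{2n+2-i,\dots,2n\}$ and $\{2n+1,\dots,4n+1-i\}$ respectively, whose disjoint union is $\{1,\dots,4n+1-i\}$ (with some intervals empty for $i\in\{1,2n+1\}$). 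Next, the strict order read off from (\ref{partial order}) gives that $X_{i_{p-1},j_{p-1}}\vartriangleright X_{i_p,j_p}$ is equivalent to
$$
i_p<i_{p-1}\quad\text{and}\quad j_{p-1}\leq j_p\leq j_{p-1}+i_{p-1}-i_p.
$$

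With these translations I would read off the ranges of summation. A zig-zag downward chain of length $r$ forces $i_1>i_2>\dots>i_r\geq 1$, which combined with $i_1\leq 2n+1$ gives $i_1\in\{r,\dots,2n+1\}$ and $i_p\in\{r-p+1,\dots,i_{p-1}-1\}$ for $p\geq 2$. Membership $a_1\in T$ forces $j_1\in\{1,\dots,4n+1-i_1\}$, while the cone condition dictates $j_p\in\{j_{p-1},\dots,j_{p-1}+i_{p-1}-i_p\}$ for $p\geq 2$. Crucially, no further constraint on $j_p$ is required for $a_p\in T$: telescoping gives
$$
j_p-j_1\leq\sum_{q=2}^{p}(i_{q-1}-i_q)=i_1-i_p,
$$
so that $j_p\leq j_1+i_1-i_p\leq 4n+1-i_p$ and $a_p\in T$ automatically. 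Summing $1$ over all admissible tuples $(i_1,j_1,\dots,i_r,j_r)$ yields the first displayed expression for $\Sigma_T(A_r)$.

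For the second form I would collapse the $j$-sums from the innermost outward. Since
$$
\sum_{j_p=j_{p-1}}^{j_{p-1}+i_{p-1}-i_p}1=i_{p-1}-i_p+1
$$
is independent of $j_{p-1}$, repeated application gives $\sum_{j_2}\cdots\sum_{j_r}1=\prod_{p=2}^{r}(i_{p-1}-i_p+1)$, while the outermost sum contributes $\sum_{j_1=1}^{4n+1-i_1}1=4n+1-i_1$, producing exactly $\pi(i_1,\dots,i_r)$. The only subtle step in the whole argument is the telescoping observation that converts chain-wide membership in $T$ into the single constraint on $j_1$; without it the $j_1$-sum would not cleanly decouple from the rest and the closed product form for $\pi$ would fail to emerge. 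Everything else is bookkeeping.
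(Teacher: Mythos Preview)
Your proof is correct and follows essentially the same route as the paper's: parameterize each chain by matrix coordinates $(i_p,j_p)$, read off the row and column ranges, and collapse the $j$-sums into the product $\pi(i_1,\dots,i_r)$. You are in fact more explicit than the paper on two points it leaves implicit---the precise description of $T$ as $\{(i,j):1\le i\le 2n+1,\ 1\le j\le 4n+1-i\}$ and the telescoping check that $a_2,\dots,a_r$ automatically lie in $T$ once $a_1$ does---so nothing is missing.
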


\begin{proof}
We want to count all
$$
S=\{a_1,\dots,a_r\}\subset T, \quad a_1\vartriangleright \dots \vartriangleright a_r.
$$
The points $a_p=(i_p,j_p)\in T$ are in distinct rows $i_1>\dots>i_r$. For the first point $a_1$ in the row $i_1$ we may choose $j_1\in\{1,\dots,4n+1-i_1\}$, altogether $4n+1-i_1$ points. For the second point $a_2\vartriangleright a_1$ in the row $i_2$ we may choose $j_2\in\{j_1,\dots,j_1+i_1-i_2\}$, altogether $i_1-i_2+1$ points. In this way we continue till the point $a_r\in T$.
\end{proof}

\begin{remark}\label{R: ostali slucajevi} The types  $B_{r\,\delta}$, $C_{\delta\, s}$ and $D_{r \delta\, s}$ differ from the type $A_r$ by having two incomparable points $b$ and $c$. In the case when $b$ and $c$ are on the same line $\ell$ (i.e. $\delta$ is $|$) in our counting procedure   for $\Sigma_T$ we first determine all possible distances $d$ between $b$ and $c$, and then all possible lines $\ell$ through $b$ and $c$---we sketch the three corresponding configurations as: 
\begin{center}
\begin{tikzpicture} [scale=1]
              \node (d) at (2.3,1) {};
		\node[fill=black, circle, inner sep=1pt](e) at (3.3,2) {};
		\node (f) at (4.3,1) {};
		\draw[dotted] (d) -- (e);
		\draw[dotted] (e) -- (f);
		
	       \node at (2.2,2) {$\scriptscriptstyle i_r$};
		\draw[dotted] (2.4,2) -- (3,2);
		\node at (3.6,2) {$\scriptscriptstyle a_r$};
		\node at (3,1.1) {$\scriptscriptstyle x$};
		\draw[dotted] (2,1) -- (4.3,1);
		\node at (1.7,1) {$\scriptscriptstyle \ell+d$};
		
		\node[fill=black, circle, inner sep=1pt](a) at (2,0) {};
		\node at (1.8,0) {$\scriptscriptstyle b$};
		\node [circle, inner sep=1pt](b) at (3,1) {};
		\node at (4.2,0) {$\scriptscriptstyle c$};
		\node at (1,0) {$\scriptscriptstyle \ell$};
		\draw[dotted] (1.2,0) -- (1.6,0);

		\node [fill=black, circle, inner sep=1pt](c) at (4,0) {};
		\draw (a) -- (b) -- (c);
		\node at (3,-0.15) {$\scriptscriptstyle d$};
		\node at (2.3,0.5) {$\scriptscriptstyle d$};
		\node at (3.7,0.5) {$\scriptscriptstyle d$};
		\draw[dotted] (2.2,0) -- (3.8,0);
	
\end{tikzpicture}\qquad
\begin{tikzpicture} [scale=1]
              \node (d) at (2.3,-1) {};
		\node[fill=black, circle, inner sep=1pt](e) at (3.3,-2) {};
		\node at (3.6,-2.15) {$\scriptscriptstyle a_1$};
		
		\node (f) at (4.3,-1) {};
		\draw[dotted] (2.1,-1) -- (f);
		\node at (1.8,-1) {$\scriptscriptstyle \ell-d$};
		
		\draw[dotted] (2,-2) -- (4,-2);
		\node at (1.8,-2) {$\scriptscriptstyle i_1$};
		
		\node [fill=black, circle, inner sep=1pt](b) at (2,0) {};
		\node at (1.8,0) {$\scriptscriptstyle b$};
		\node[circle, inner sep=1pt](y) at (3,-1) {};
		\node at (3,-1.2) {$\scriptscriptstyle y$};
		
		\draw[dotted] (3,-1) -- (4,-2);
		\draw[dotted] (3,-1) -- (2,-2);
		
		\node at (2.25,-0.5) {$\scriptscriptstyle d$};
		\node at (3.7,-0.5) {$\scriptscriptstyle d$};

		\node [fill=black, circle, inner sep=1pt](c) at (4,0) {};
		\node at (4.2,0) {$\scriptscriptstyle c$};
		\draw (b) -- (y) -- (c);
		\node at (3,-0.15) {$\scriptscriptstyle d$};
		\draw[dotted] (b) -- (c);
		 \node at (1,0) {$\scriptscriptstyle \ell$};
		\draw[dotted] (1.2,0) -- (1.6,0);
\end{tikzpicture}
\qquad
\begin{tikzpicture} [scale=1]
              \node [circle, inner sep=1pt](x) at (3,1) {};
		\node at (1.8,0) {$\scriptscriptstyle b$};
		\node [fill=black, circle, inner sep=1pt](b) at (2,0) {};

		\node[circle, inner sep=1pt](y) at (3,-1) {};
		\node at (4.2,0) {$\scriptscriptstyle c$};
		\node [fill=black, circle, inner sep=1pt](c) at (4,0) {};

		\draw (b) -- (y) -- (c);
		\draw (b) -- (x) -- (c);
		\draw[dotted] (2.2,1) -- (4,1);
		\draw[dotted] (2.2,-1) -- (4,-1);
	       \node at (1,0) {$\scriptscriptstyle \ell$};
		\draw[dotted] (1.2,0) -- (1.6,0);
		\draw[dotted] (b) -- (c);
		\node at (3,-0.15) {$\scriptscriptstyle d$};
		
		\node at (2.25,-0.5) {$\scriptscriptstyle d$};
		\node at (3.7,-0.5) {$\scriptscriptstyle d$};
		
		\node at (2.25,0.5) {$\scriptscriptstyle d$};
		\node at (3.7,0.5) {$\scriptscriptstyle d$};
		
		\node at (1.8,-1) {$\scriptscriptstyle \ell-d$};
		\node at (1.8,1) {$\scriptscriptstyle \ell+d$};
		
		\node at (3,1.1) {$\scriptscriptstyle x$};
		\node at (3,-1.2) {$\scriptscriptstyle y$};
\end{tikzpicture}
\end{center}
After that we argue in a manner very similar to the proof of Lemma \ref{L: suma za tip A}. 
\end{remark}

\begin{lemma}\label{L: suma za tip B$|$} For $1\leq r\leq 2n$ we have:
$$
\begin{gathered}
\Sigma_T(B_{r\, |})=\sum_{d=1}^{2n+1-r}\ \sum_{\ell=1}^{2n+2-d-r}
\ \sum_{i_1=\ell+d+r-1}^{2n+1}\ \sum_{i_2=\ell+d+r-2}^{i_1-1}\ \dots
\ \sum_{i_{r}=\ell+d}^{i_{r-1}-1}\ \pi(d,\ell; i_1,\dots,i_r),\\
\pi(d,\ell; i_1,\dots,i_r)=(4n+1-i_1)(i_1-i_2+1)\cdots(i_{r-1}-i_{r}+1)(i_{r}-\ell-d+1).
\end{gathered}
$$
\end{lemma}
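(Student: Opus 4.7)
The plan is a direct enumeration, in close parallel with the proof of Lemma \ref{L: suma za tip A}, following the scheme sketched for the first of the three configurations in Remark \ref{R: ostali slucajevi}. Given a configuration $\{a_1,\dots,a_r,b,c\}\subset T$ of type $B_{r\,|}$, I choose its geometric data in the order suggested there: first the common row $\ell$ of $b,c$ and their column distance $d=c_j-b_j\geq 1$; then the distinct rows $i_r<i_{r-1}<\dots<i_1$ of the chain $a_r\vartriangleleft\dots\vartriangleleft a_1$; and finally the columns of all points. The only genuinely new ingredient with respect to Lemma \ref{L: suma za tip A} is an additional factor coming from the placement of the unordered pair $\{b,c\}$ inside the cone below $a_r$.

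With $b,c$ fixed in row $\ell$ at column distance $d$, the partial order (\ref{partial order}) forces $a_r=(i_r,j_r)$ to satisfy $i_r\geq \ell+d$, because the cone below $(i_r,j_r)$ has width $i_r-\ell+1$ in row $\ell$ and must accommodate two points $d$ columns apart. I then count columns from the top of the chain downward: $a_1$ occupies any of the $4n+1-i_1$ positions available in row $i_1$ of $T$; for $2\leq p\leq r$ the inclusion $a_p\trianglelefteq a_{p-1}$ restricts $j_p$ to a window of $i_{p-1}-i_p+1$ consecutive values, giving the factor $(i_{p-1}-i_p+1)$; once $a_r$ is placed, $b$ can occupy any of the $i_r-\ell-d+1$ columns in row $\ell$ such that both $b$ and $c=(\ell,b_j+d)$ still sit in the cone below $a_r$, after which $c$ is determined. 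The product of these factors is exactly $\pi(d,\ell;i_1,\dots,i_r)$.

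The summation ranges are forced by positivity of these counts together with $i_1\leq 2n+1$: the strict descent $i_p\geq i_{p+1}+1$ cascades to $i_p\geq \ell+d+r-p$, whence $\ell+d+r-1\leq 2n+1$, giving $\ell\leq 2n+2-d-r$ and $d\leq 2n+1-r$. Summing the product over this range is precisely the formula claimed.

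The main item to verify carefully is that the cone below any point $(i_r,j_r)\in T$ is automatically contained in $T$—this follows from $i_r+j_r\leq 4n+1$ together with the fact that row $i$ of $T$ has width $4n+1-i$—so that the column counts stated really are the counts in $T$ and no configurations are lost or double-counted at the boundary. It is also necessary to observe that, with the convention $b_j<c_j$, the parameters $(d,\ell,b_j)$ pick out each unordered pair $\{b,c\}$ lying in the same row exactly once. With these bookkeeping points in hand the rest of the argument is just the product rule.
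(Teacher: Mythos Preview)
Your proof is correct and follows essentially the same approach as the paper's. The one cosmetic difference is that the paper interprets the final factor $(i_r-\ell-d+1)$ as the number of positions for the apex $x$ of the equilateral triangle $\Delta(b,c,x)$ on row $\ell+d$ inside the cone below $a_r$, whereas you count directly the positions of $b$ in row $\ell$ such that both $b$ and $c=(\ell,b_j+d)$ lie in that cone; since $x=(\ell+d,b_j)$, these two counts are identical.
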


\begin{proof} We should determine the number of all possible  $\{a_1, \dots, a_r,b,c\}\subset T$, $r\geq 1$, such that $a_1\vartriangleright\dots\vartriangleright a_r$, $a_r\vartriangleright b$, $a_r\vartriangleright c$ and $b\neq c$ are on the same line $\ell$. The distance $d=j_2-j_1$ between the points $b=(\ell,j_1)$ and $c=(\ell,j_2)$, $j_2>j_1$, may be $1$ because for $\ell=1$ we still can place $r\leq 2n$ points $a_1\vartriangleright\dots\vartriangleright a_r$ above both $b$ and $c$. In general for $d\geq 1$ and $b$ and $c$ on the line $1\leq\ell\leq 2n+1$ the point $a_r$ is on the line $i_r\geq \ell+d$  and we need at least $r-1$ lines above to place the remaining $r-1$ points $a_1\vartriangleright\dots\vartriangleright a_{r-1}$. Hence
\begin{equation}\label{E:interval za d slucaj Bh=0}
\ell+d+r-1\leq 2n+1,
\end{equation}
and for $\ell=1$ this implies 
$$
1\leq d\leq 2n+1-r.
$$
Once we have fixed $d$, by (\ref{E:interval za d slucaj Bh=0}) we should take
$$
1\leq \ell\leq 2n+2-d-r.
$$
Now we proceed as in the proof of Lemma \ref{L: suma za tip A}, the only difference being that for the vertex $x$ in the equilateral triangle $\Delta(b,c,x)$ there is
$$
i_{r}-\ell-d+1
$$
places on the line $\ell+d$ which are in the cone below $a_r$.
\end{proof}

\begin{lemma}\label{L: suma za tip C$|$} For $1\leq r\leq 2n$ we have:
$$
\begin{gathered}
\Sigma_T(C_{|\, r})= 1\sum_{d=1}^{2n+1-r}\ \sum_{\ell=d+r}^{2n+1}
\ \sum_{i_1=r}^{\ell-d}\ \sum_{i_2=r-1}^{i_1-1}\ \dots
\ \sum_{i_{r}=1}^{i_{r-1}-1}\ \pi(d,\ell; i_1,\dots,i_r),\\
\pi(d,\ell; i_1,\dots,i_r)=(4n+1-\ell-d)(\ell-d-i_1+1)(i_{1}-i_{2}+1)\dots(i_{r-1}-i_{r}+1).
\end{gathered}
$$
\end{lemma}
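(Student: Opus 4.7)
The plan is to adapt the proof of Lemma~\ref{L: suma za tip B$|$} \emph{mutatis mutandis}: a configuration of type $C_{|\,r}$ is the up--down mirror of one of type $B_{r\,|}$, with the pair of incomparable points $\{b,c\}$ now sitting on a common horizontal line \emph{above} the decreasing chain $a_1\vartriangleright\dots\vartriangleright a_r$ rather than below it. I would count by successively choosing $d$, then $\ell$, then the positions of the pair $(b,c)$, and finally the chain $a_1,\dots,a_r$, multiplying independent counts.

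First I would fix $d=j_c-j_b\geq 1$, the horizontal distance between $b$ and $c$ on the common row $\ell$. The common cone $\{x\in\bar B_{<0}\mid x\trianglelefteq b\text{ and }x\trianglelefteq c\}$ is an inverted triangle whose topmost row is $\ell-d$. For the chain $a_1\vartriangleright\dots\vartriangleright a_r$ to fit strictly inside this cone one needs $\ell-d\geq r$; together with $\ell\leq 2n+1$ this forces
\[
d\in\{1,\dots,2n+1-r\},\qquad \ell\in\{d+r,\dots,2n+1\},
\]
which produces the two outer summations.

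With $\ell$ and $d$ fixed, the number of ordered pairs $\bigl((\ell,j_b),(\ell,j_b+d)\bigr)$ still fitting inside $T$ equals the number of admissible $j_b$, namely $4n+1-\ell-d$; this is the first factor of $\pi(d,\ell;i_1,\dots,i_r)$. Then I would place $a_1$ in some row $i_1$ with $r\leq i_1\leq \ell-d$ --- the upper bound because $a_1$ must lie in the common cone below $b$ and $c$, the lower bound because $r-1$ further points have to be fitted in strictly decreasing rows beneath. The intersection of this common cone with row $i_1$ is a horizontal segment of length $\ell-d-i_1+1$, the second factor. The remaining points $a_2\vartriangleright\dots\vartriangleright a_r$ would then be placed exactly as in the proof of Lemma~\ref{L: suma za tip A}: once $a_{p-1}$ sits in row $i_{p-1}$, the cone below $a_{p-1}$ intersects row $i_p$ in an interval of length $i_{p-1}-i_p+1$.

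I do not expect any real obstacle; the one thing to verify carefully is that each successive cone-row intersection lies entirely in $T$ and has exactly the claimed length, and that the index bounds (in particular $i_1\geq r$ and $i_r\geq 1$) are precisely what is needed to fit the whole chain --- both facts follow directly from the definition of $\trianglelefteq$ in (\ref{partial order}). Multiplying the independent factors and summing over all admissible tuples $(d,\ell,i_1,\dots,i_r)$ yields the stated formula for $\Sigma_T(C_{|\,r})$.
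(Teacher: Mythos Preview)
Your proposal is correct and follows essentially the same approach as the paper's proof: both fix $d$ and $\ell$ with the bounds $1\le d\le 2n+1-r$ and $d+r\le\ell\le 2n+1$, count $4n+1-\ell-d$ positions for the pair $(b,c)$ on row $\ell$, then place $a_1$ on a row $i_1\le\ell-d$ with $\ell-d-i_1+1$ choices, and finish the chain as in Lemma~\ref{L: suma za tip A}. The only cosmetic difference is that the paper phrases the factor $\ell-d-i_1+1$ as the number of positions in row $i_1$ lying in the cone below the vertex $y$ of the equilateral triangle $\Delta(b,y,c)$, which is exactly your ``common cone below $b$ and $c$''.
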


\begin{proof} 
We should determine the number of all possible  $\{b,c,a_1, \dots, a_r\}\subset T$, $r\geq 1$, such that $a_1\vartriangleright\dots\vartriangleright a_r$, $b\vartriangleright a_1$, $c\vartriangleright a_1$ and $b\neq c$ are on the same line $\ell$. The distance $d=j_2-j_1$ between the points $b=(\ell,j_1)$ and $c=(\ell,j_2)$, $j_2>j_1$, may be $1$ because on the ``smallest'' top row of trapezoid $T$ we have at least $2$ points. For $r$ points $a_1, \dots, a_r$ we need $r$ distinct rows, and $\ell$-th row is above them at the distance $d$. Hence
$$
d+r\leq 2n+1.
$$
For fixed $d$ we clearly have
$$
d+r\leq\ell\leq 2n+1.
$$
There are $4n+1-\ell-d$ ways to put two points $b$ and $c$ on the line $\ell$ at distance $d$.
For fixed $d$, and $b$ and $c$ on the line $1\leq\ell\leq 2n+1$, the point $a_1$ is on the line $i_1\leq \ell-d$. Now we proceed as in the proof of Lemma \ref{L: suma za tip A}, the only difference being that for the vertex $y$ in the equilateral triangle $\Delta(b,y,c)$ there is
$$
\ell-d-i_1+1
$$
places on the line $i_1$ which are in the cone below $y$.
\end{proof}

\begin{lemma}\label{L: suma za tip D$|$} For $2\leq r+s\leq 2n$ we have:
$$
\begin{gathered}
\Sigma_T(D_{r\,|\, s})=\sum_{d=1}^{ \lfloor \tfrac{2n+2-r-s}{2}\rfloor}\ 
 \sum_{\ell=s+d}^{2n+2-r-d}
\ \sum_{i_1=\ell+d-r+1}^{2n+1}\ \sum_{i_2=\ell+d-r+2}^{i_1-1}\ \dots
\ \sum_{i_{r}=\ell+d}^{i_{r-1}-1}\\
\ \sum_{l_1=s}^{\ell-d}\ \sum_{l_2=s-1}^{l_1-1}\ \dots
\ \sum_{l_{s}=1}^{l_{s-1}-1}\ \pi(d,\ell; i_1,\dots,i_r;l_1,\dots,l_s),\\
\pi(d,\ell; i_1,\dots,i_r;l_1,\dots,l_s)=(4n+1-i_1)(i_{1}-_{2}+1)\dots(i_{r-1}-i_{r}+1)
(i_r-\ell-d+1)\\
(\ell-d-l_1+1)(l_{1}-l_{2}+1)\dots(l_{s-1}-l_{s}+1).
\end{gathered}
$$
\end{lemma}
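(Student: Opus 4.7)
The plan is to combine the counting strategies of the preceding two lemmas, for types $B_{r\,|}$ and $C_{|\,r}$, into a single bi-directional chain argument. First, I will fix the pair of incomparable points $b=(\ell,j_1)$ and $c=(\ell,j_2)$ lying on the common row $\ell$ at distance $d=j_2-j_1\geq 1$. With $(d,\ell)$ held fixed, a configuration of type $D_{r\,|\,s}$ decouples into two essentially independent sub-configurations: the upper chain $a_1\vartriangleright\dots\vartriangleright a_r$ sitting above $\{b,c\}$, and the lower chain $d_1\vartriangleright\dots\vartriangleright d_s$ sitting below. I will count each separately and multiply.

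For the upper count, the conditions $a_r\vartriangleright b$ and $a_r\vartriangleright c$ together are equivalent to $a_r\trianglerighteq x$, where $x=(\ell+d,j_1)$ is the apex of the upward equilateral triangle $\Delta(b,c,x)$. Hence $a_r$ may occupy any row $i_r\geq\ell+d$, and for each such $i_r$ there are $i_r-\ell-d+1$ admissible positions for $x$ (equivalently, for the pair $(b,c)$) in the cone below $a_r$, exactly as in the proof of the $B_{r\,|}$ lemma. The remaining chain $a_1\vartriangleright\dots\vartriangleright a_{r-1}$ is then counted as in Lemma \ref{L: suma za tip A}, producing the nested factors $(4n+1-i_1)(i_1-i_2+1)\cdots(i_{r-1}-i_r+1)$. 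Symmetrically, the conditions $b\vartriangleright d_1$ and $c\vartriangleright d_1$ are equivalent to $d_1\trianglelefteq y$, where $y=(\ell-d,j_2)$ is the apex of the downward equilateral triangle $\Delta(b,y,c)$; this gives the factor $(\ell-d-l_1+1)$ for placing $d_1$ on row $l_1\leq\ell-d$ in the cone below $y$, followed by the chain factors $(l_1-l_2+1)\cdots(l_{s-1}-l_s+1)$ summed over $\ell-d\geq l_1>\dots>l_s\geq 1$, exactly as in the $C_{|\,r}$ argument.

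It then remains to pin down the admissible range for $(d,\ell)$ by enforcing the fit-in-$T$ conditions at both ends. The upper chain of $r$ distinct rows with $i_r\geq\ell+d$ needs $\ell+d+r-1\leq 2n+1$, i.e.\ $\ell\leq 2n+2-r-d$; the lower chain of $s$ distinct rows with $l_1\leq\ell-d$ needs $\ell-d\geq s$, i.e.\ $\ell\geq s+d$. The combined interval $s+d\leq\ell\leq 2n+2-r-d$ is non-empty precisely when $2d\leq 2n+2-r-s$, giving $1\leq d\leq\lfloor(2n+2-r-s)/2\rfloor$, which is the outer summation range. Multiplying the upper and lower contributions and summing in the indicated order then reproduces the stated formula for $\Sigma_T(D_{r\,|\,s})$.

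The main obstacle is purely bookkeeping: one must verify that the top/bottom decomposition is genuinely independent, meaning that encoding the link between the two chains through the apices $x$ on row $\ell+d$ and $y$ on row $\ell-d$ is exhaustive and creates no double counting, particularly in the degenerate cases where $a_r$ sits exactly on row $\ell+d$ or $d_1$ sits exactly on row $\ell-d$. Once this decoupling is checked, the nested-sum structure of $\pi(d,\ell;i_1,\dots,i_r;l_1,\dots,l_s)$ follows by iterating the elementary cone-below count used throughout Lemma \ref{L: suma za tip A}, and only the floor in the upper bound on $d$ requires any delicate parity accounting.
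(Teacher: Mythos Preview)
Your proposal is correct and follows essentially the same approach as the paper: fix the distance $d$ and the row $\ell$ of the incomparable pair $\{b,c\}$, encode the link to the upper and lower chains via the apices $x$ on row $\ell+d$ and $y$ on row $\ell-d$ (the paper phrases this as the rhombus $R(b,y,c,x)$ rather than two triangles), and then iterate the cone-below count from Lemma~\ref{L: suma za tip A}. Your derivation of the ranges $s+d\leq\ell\leq 2n+2-r-d$ and $1\leq d\leq\lfloor(2n+2-r-s)/2\rfloor$ is in fact cleaner than the paper's proof text, which contains off-by-one typos in these bounds relative to the lemma's own statement.
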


\begin{proof} 
We should determine the number of all possible  $\{a_1, \dots, a_r,b,c,d_1,\dots,d_s\}$, $r,s\geq 1$, $a_1\vartriangleright\dots\vartriangleright a_r$, $a_r\vartriangleright b\vartriangleright d_1$, $a_r\vartriangleright c\vartriangleright d_1$, $d_1\vartriangleright\dots\vartriangleright d_s$,  and $b\neq c$ are on the same line $\ell$. We need to place $r$ distinct rows above $\ell$ at the distance $d$, and $s$ distinct rows below $\ell$ at the distance $d$. Hence $r-1+s-1+2d\leq 2n+1$, and for positive integer $d$ we have
$$
1\leq d\leq \lfloor \tfrac{2n+3-r-s}{2}\rfloor.
$$
For fixed $d$ we have $s+d\leq \ell\leq 2n+1-r-d$, and we proceed as in the proof of lemmas above: 
the factors in $\pi(d,\ell; i_1,\dots,i_r;l_1,\dots,l_s)$ count the number of points we can place in one row when a point in the row above is chosen. In particular, the factor $(i_r-\ell-d+1)$ counts the number of places the vertex $x$ of the rhombus $R(b,y,c,x)$ can take on the line $\ell+d$ in the cone below $a_r$, and 
$(\ell-d-l_1+1)$ is the number of places on the line $l_1$ which are in the cone below the vertex $y$.
\end{proof}

\begin{remark}\label{R: ostali slucajevi za drugi delta} The types  $B_{r\,||}$, $C_{||\, s}$ and $D_{r \,||\, s}$ have two incomparable points $b$ and $c$ in different lines in two possible configurations
\begin{center}
\begin{tikzpicture} [scale=1]
             \draw[dotted] (1.8,0) -- (4.2,0);
              \node at (2.2,0.15) {$\scriptscriptstyle b$};
             \node[fill=black, circle, inner sep=1pt](e) at (2.2,0) {};
             \node at (1.5,0) {$\scriptscriptstyle \ell-h$};
             
                \node at (4.2,0.25) {$\scriptscriptstyle h$};
             
               \draw[dotted] (1.8,0.5) -- (4.2,0.5);
              \node at (3.8,0.65) {$\scriptscriptstyle c$};
             \node[fill=black, circle, inner sep=1pt](e) at (3.8,0.5) {};
             \node at (1.5,0.5) {$\scriptscriptstyle \ell$};
\end{tikzpicture}
\qquad
\begin{tikzpicture} [scale=1]
             \draw[dotted] (1.8,0.5) -- (4.2,0.5);
              \node at (2.2,0.65) {$\scriptscriptstyle b$};
             \node[fill=black, circle, inner sep=1pt](e) at (2.2,0.5) {};
             \node at (1.5,0.5) {$\scriptscriptstyle \ell$};
             
                \node at (4.2,0.25) {$\scriptscriptstyle h$};
             
               \draw[dotted] (1.8,0) -- (4.2,0);
              \node at (3.8,0.15) {$\scriptscriptstyle c$};
             \node[fill=black, circle, inner sep=1pt](e) at (3.8,0) {};
             \node at (1.5,0) {$\scriptscriptstyle \ell-h$};
\end{tikzpicture}
\end{center}
Due to the left-right symmetry it is enough to consider $\Sigma_T$ for only the first one counted twice. As before, we first determine all possible distances $d=j_2-j_1\geq1$ between $b=(\ell-h,j_1)$ and $c=(j_2,\ell)$, and then all possible $h$ and lines $\ell$ through $c$---we sketch the three corresponding configurations as: 
\begin{center}
\begin{tikzpicture} [scale=1]
              \node (d) at (2.3,1) {};
		\node[fill=black, circle, inner sep=1pt](e) at (3.3,2) {};
		\node (f) at (4.3,1) {};
		\draw[dotted] (d) -- (e);
		\draw[dotted] (e) -- (f);
		
	       \node at (2.2,2) {$\scriptscriptstyle i_r$};
		\draw[dotted] (2.4,2) -- (3,2);
		\node at (3.6,2) {$\scriptscriptstyle a_r$};
		\node at (3,1.1) {$\scriptscriptstyle x$};
		\draw[dotted] (2,1) -- (4.3,1);
		\node at (1.7,1) {$\scriptscriptstyle \ell+d$};

		\node[fill=black, circle, inner sep=1pt](a) at (1.75,-0.25) {};
		\node at (1.75,-0.45) {$\scriptscriptstyle b$};
		\node at (1,-0.25) {$\scriptscriptstyle \ell-h$};
		
		\node [circle, inner sep=1pt](b) at (3,1) {};
		\node at (4.2,0) {$\scriptscriptstyle c$};
		\node at (1,0) {$\scriptscriptstyle \ell$};
		\draw[dotted] (1.3,0) -- (3.8,0);
		\draw[dotted] (1.3,-0.25) -- (4,-0.25);

		\node [fill=black, circle, inner sep=1pt](c) at (4,0) {};
		\draw (a) -- (b) -- (c);
		\node at (3,0.15) {$\scriptscriptstyle d$};
		\node at (3.7,0.5) {$\scriptscriptstyle d$};
	
\end{tikzpicture}\qquad
\begin{tikzpicture} [scale=1]
              \node (d) at (2.3,-1) {};
		\node[fill=black, circle, inner sep=1pt](e) at (3.3,-2) {};
		\node at (3.6,-2.15) {$\scriptscriptstyle a_1$};
		
		\node (f) at (4.3,-1) {};
		\draw[dotted] (2.2,-1) -- (f);
		\draw[dotted] (2.1,-0.25) -- (4,-0.25);
		\node at (1.8,-1) {$\scriptscriptstyle \ell-d-h$};
		\node at (1.8,-0.25) {$\scriptscriptstyle \ell-h$};

		\draw[dotted] (2,-2) -- (4,-2);
		\node at (1.8,-2) {$\scriptscriptstyle i_1$};
		
		\node [fill=black, circle, inner sep=1pt](b) at (2.25,-0.25) {};
		\node at (2.25,-0.45) {$\scriptscriptstyle b$};
		\node at (1.8,0) {$\scriptscriptstyle \ell$};
		\node[circle, inner sep=1pt](y) at (3,-1) {};
		\node at (3,-1.2) {$\scriptscriptstyle y$};
		
		\draw[dotted] (3,-1) -- (4,-2);
		\draw[dotted] (3,-1) -- (2,-2);

		\node [fill=black, circle, inner sep=1pt](c) at (4,0) {};
		\node at (4.2,0) {$\scriptscriptstyle c$};
		\draw (b) -- (y) -- (c);
		\node at (3,-0.45) {$\scriptscriptstyle d$};
		\draw[dotted] (b) -- (2,0) -- (c);
		\draw[dotted] (2.5,0) -- (2,-0.5);
		\node at (3.25,0.15) {$\scriptscriptstyle d$};
\end{tikzpicture}
\qquad
\begin{tikzpicture} [scale=1]
              \node [circle, inner sep=1pt](x) at (3.25,1.25) {};
	
	       \node at (2,-0.2) {$\scriptscriptstyle b$};
		\node at (1.7,0) {$\scriptscriptstyle \ell-h$};
		\node [fill=black, circle, inner sep=1pt](b) at (2,0) {};

		\node[circle, inner sep=1pt](y) at (3,-1) {};
		\node at (4.45,0.25) {$\scriptscriptstyle c$};
		\node [fill=black, circle, inner sep=1pt](c) at (4.25,0.25) {};
		\draw[dotted] (2,0.25) -- (c);
		\node at (1.7,0.25) {$\scriptscriptstyle \ell$};
		
		\draw (b) -- (y) -- (c);
		\draw (b) -- (x) -- (c);
		\draw[dotted] (2.2,1.25) -- (4,1.25);
		\draw[dotted] (2.2,-1) -- (4,-1);
		
		\draw[dotted] (b) -- (4,0);
		\node at (3,-0.15) {$\scriptscriptstyle d$};
		
		\node at (2.25,-0.5) {$\scriptscriptstyle d$};

		\node at (3.25,0.4) {$\scriptscriptstyle d$};
		\node at (3.95,0.75) {$\scriptscriptstyle d$};
		
		\node at (1.8,-1) {$\scriptscriptstyle \ell-d-h$};
		\node at (1.8,1.25) {$\scriptscriptstyle \ell+d$};
		
		\node at (3.25,1.35) {$\scriptscriptstyle x$};
		\node at (3,-1.15) {$\scriptscriptstyle y$};
\end{tikzpicture}
\end{center}
\end{remark}

\begin{lemma}\label{L: suma za tip B$||$}For $1\leq r\leq 2n-1$ we have:
$$
\begin{gathered}
\Sigma_T(B_{r\, ||})=2\sum_{d=1}^{2n-r}\ \sum_{h=1}^{2n+1-d-r}\ \sum_{\ell=h+1}^{2n+2-d-r}
\sum_{i_1=\ell+d+r-1}^{2n+1}\ \sum_{i_2=\ell+d+r-2}^{i_1-1}\ \dots
\ \sum_{i_{r}=\ell+d}^{i_{r-1}-1}\ \pi(d,\ell; i_1,\dots,i_r),\\
\pi(d,\ell; i_1,\dots,i_r)=(4n+1-i_1)(i_1-i_2+1)\cdots(i_{r-1}-i_{r}+1)(i_{r}-\ell-d+1).
\end{gathered}
$$
\end{lemma}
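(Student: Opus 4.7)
The strategy is to adapt the argument of Lemma~\ref{L: suma za tip B$|$} to the $||$-configuration, in which the two incomparable points $b,c$ sit in different rows. First I would invoke the left--right symmetry noted in Remark~\ref{R: ostali slucajevi za drugi delta} to restrict to the sub-case where $c$ lies strictly to the upper right of $b$---that is, $b=(\ell-h,j_1)$ and $c=(\ell,j_2)$ with $j_2>j_1$---and multiply the final count by $2$.

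Setting $d=j_2-j_1\geq 1$, a direct check using (\ref{partial order}) shows that the unique minimal element $\trianglerighteq b,c$ is $x=(\ell+d,j_1)$, so the condition $a_r\vartriangleright b$ and $a_r\vartriangleright c$ is equivalent to $a_r\trianglerighteq x$, forcing $i_r\geq \ell+d$. The requirement $b\in T$ becomes $\ell-h\geq 1$, i.e.\ $\ell\geq h+1$, and the existence of $r$ distinct rows $i_1>i_2>\cdots>i_r\geq \ell+d$ in $T$ (whose topmost row is indexed $2n+1$) yields $\ell+d+r-1\leq 2n+1$, i.e.\ $\ell\leq 2n+2-d-r$. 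Combining these with the positivity of $d$ and $h$ produces exactly the summation ranges $d\leq 2n-r$, $h\leq 2n+1-d-r$, and $h+1\leq \ell\leq 2n+2-d-r$ recorded in the statement, together with $i_p\geq \ell+d+r-p$ for each $p$.

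With $(d,h,\ell)$ and $(i_1,\dots,i_r)$ fixed, I would count horizontal positions exactly as in Lemma~\ref{L: suma za tip A}: $(4n+1-i_1)$ choices for $a_1$ in its row of $T$ and $(i_{p-1}-i_p+1)$ choices for each subsequent $a_p$ in the cone below $a_{p-1}$. The remaining factor $(i_r-\ell-d+1)$ counts the admissible positions of $x$ on row $\ell+d$ inside the cone below $a_r$; since the column of $x$ equals $j_1$, it simultaneously fixes $b=(\ell-h,j_1)$ and $c=(\ell,j_1+d)$, so this single factor accounts for the entire pair $(b,c)$. A short check using $j_1\leq 4n+1-(\ell+d)$ confirms that both $b$ and $c$ then automatically lie in $T$. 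Multiplying these factors gives $\pi(d,\ell;i_1,\dots,i_r)$, and doubling yields the claimed expression for $\Sigma_T(B_{r\,||})$.

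The main obstacle is combinatorial bookkeeping: one must verify that the symmetry reduction captures every $||$-configuration exactly once before doubling, and that the stated parameter ranges precisely correspond to feasibility in $T$. Both reduce to straightforward inequalities once the coordinates of the upper common vertex $x$ have been pinned down via (\ref{partial order}).
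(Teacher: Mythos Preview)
Your proposal is correct and follows essentially the same route as the paper's own proof: reduce by the left--right symmetry of Remark~\ref{R: ostali slucajevi za drugi delta}, derive the bounds on $d,h,\ell$ from $\ell-h\geq 1$ together with $\ell+d+r-1\leq 2n+1$, and then count column positions row by row exactly as in Lemma~\ref{L: suma za tip B$|$}. Your version is in fact more explicit than the paper's---you identify the minimal common upper bound $x=(\ell+d,j_1)$ and verify that placing $x$ in the cone below $a_r$ automatically forces $b,c\in T$---whereas the paper simply says ``we argue as in the proof of Lemma~\ref{L: suma za tip B$|$}'' and leaves these checks to the reader.
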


\begin{proof} We argue as in the proof of Lemma \ref{L: suma za tip B$|$}: 
for $d\geq 1$, $h\geq1$ and $c$ on the line $1\leq\ell\leq 2n+1$ the point $a_r$ is on the line $i_r\geq \ell+d$  and we need at least $r-1$ lines above to place the remaining $r-1$ points $a_1\vartriangleright\dots\vartriangleright a_{r-1}$. Hence
\begin{equation}\label{E:interval za d slucaj Bh>0}
\ell+d+r-1\leq 2n+1,
\end{equation}
and for $\ell=h+1$ this implies $d\leq 2n+1-r-h$. For $h=1$ we determine the bounds for $d$, for chosen $d$ we determine the bounds for $h$, and by using (\ref{E:interval za d slucaj Bh>0}) we determine the bounds for $\ell$:
$$
1\leq d\leq 2n-r,\quad 1\leq h\leq 2n+1-d-r,\quad h+1\leq \ell\leq 2n+2-d-r.
$$
Now we proceed as in the proof of Lemma \ref{L: suma za tip B$|$}.
\end{proof}

\begin{lemma}\label{L: suma za tip C$||$}For $1\leq r\leq 2n-1$ we have:
$$
\begin{gathered}
\Sigma_T(C_{||\, r})=2\sum_{d=1}^{2n-r}\ \sum_{h=1}^{2n+1-d-r}\ \sum_{\ell=d+h+r}^{2n+1}
\sum_{i_1=r}^{\ell-d-h}\ \sum_{i_2=r-1}^{i_1-1}\ \dots
\ \sum_{i_{r}=1}^{i_{r-1}-1}\ \pi(d,h,\ell; i_1,\dots,i_r),\\
\pi(d,h,\ell; i_1,\dots,i_r)=(4n+1-\ell-d)(\ell-d-h-i_1+1)(i_{1}-i_{2}+1)\cdots(i_{r-1}-i_{r}+1).
\end{gathered}
$$
\end{lemma}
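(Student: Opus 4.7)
The plan is to transcribe the proof of Lemma~\ref{L: suma za tip C$|$}, adjoining the vertical offset parameter $h \geq 1$ between $b$ and $c$, in the same way that Lemma~\ref{L: suma za tip B$||$} extended Lemma~\ref{L: suma za tip B$|$}. By the left--right symmetry described in Remark~\ref{R: ostali slucajevi za drugi delta}, it suffices to count the configurations with $b = (\ell - h, j_1)$ strictly to the left of $c = (\ell, j_2)$, i.e.\ with $j_1 < j_2$, and then multiply the resulting sum by~$2$.

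Setting $d = j_2 - j_1 \geq 1$, a direct calculation using the cone description~(\ref{partial order}) identifies the intersection of the two cones below $b$ and $c$ with the cone below an auxiliary apex $y$ sitting on row $\ell - d - h$: the cone below $b$ at row $i \leq \ell - h$ occupies columns $[j_1, j_1 + \ell - h - i]$, the cone below $c$ at row $i \leq \ell$ occupies $[j_2, j_2 + \ell - i]$, and their intersection $[j_2, j_1 + \ell - h - i]$ is nonempty precisely when $i \leq \ell - d - h$. Consequently, at row $i_1 \leq \ell - d - h$ there are exactly $\ell - d - h - i_1 + 1$ admissible positions for $a_1$, which is the extra factor in $\pi(d,h,\ell; i_1, \dots, i_r)$ compared with Lemma~\ref{L: suma za tip A}, and the remaining chain $a_1 \vartriangleright \dots \vartriangleright a_r$ contributes the familiar telescoping factors $(i_{k-1} - i_k + 1)$.

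Reading off the outer bounds: requiring $a_r$ on row $\geq 1$ gives $i_1 \geq r$, and combined with $i_1 \leq \ell - d - h$ this forces $\ell \geq d + h + r$, which together with $\ell \leq 2n + 1$, $h \geq 1$ and $d \geq 1$ yields the three outer ranges $1 \leq d \leq 2n - r$, $1 \leq h \leq 2n + 1 - d - r$ and $d + h + r \leq \ell \leq 2n + 1$ stated in the lemma. For the leading factor, the constraints $j_1 \geq 1$ and $j_2 = j_1 + d \leq 4n + 1 - \ell$ are the binding ones (the bound $j_1 \leq 4n + 1 - \ell + h$ arising from $b \in T$ is weaker since $-d < h$), so there are exactly $4n + 1 - \ell - d$ admissible pairs $(b, c)$, producing the leading factor $(4n + 1 - \ell - d)$.

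The main point I expect to need a careful check is verifying that the incomparability of $b$ and $c$ with respect to $\trianglelefteq$ really is equivalent to $d \geq 1$ in the chosen left--right orientation, and that the mirror configuration (with $b$ to the right of $c$) contributes an identical count so that the overall factor of~$2$ is justified. Once these bookkeeping items are dispatched, the result follows as a routine amalgamation of the counting patterns established in Lemmas~\ref{L: suma za tip C$|$} and~\ref{L: suma za tip B$||$}.
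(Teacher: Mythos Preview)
Your proof is correct and follows essentially the same approach as the paper: both argue by adjoining the vertical offset $h$ to the setup of Lemma~\ref{L: suma za tip C$|$}, invoke the left--right symmetry of Remark~\ref{R: ostali slucajevi za drugi delta} for the factor~$2$, derive the bounds $d+h+r\leq 2n+1$ (hence $1\leq d\leq 2n-r$, $1\leq h\leq 2n+1-d-r$, $d+h+r\leq\ell\leq 2n+1$), and identify the factors $(4n+1-\ell-d)$ and $(\ell-d-h-i_1+1)$ exactly as the paper does via the vertex~$y$. Your version is in fact more explicit than the paper's, spelling out the cone--intersection computation and the check that $j_2\leq 4n+1-\ell$ is the binding column constraint; the incomparability check you flag as needing care is immediate from $d\geq1$, $h\geq1$ (since $j_1<j_2$ blocks $b\trianglelefteq c$ and $h\geq1$ blocks $c\trianglelefteq b$).
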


\begin{proof} For $r$ points $a_1, \dots, a_r$ we need $r$ distinct rows, and $\ell$-th row is above them at the distance $d+h$. Hence
$$
d+h+r\leq 2n+1.
$$
For $h=1$ we determine the bounds for $d$, for chosen $d$ we determine the bounds for $h$, and for chosen $h$ we determine the bounds for $\ell$: 
$$
1\leq d\leq 2n-r,\quad 1\leq h\leq 2n+1-r-d, \quad r+d+h\leq \ell\leq 2n+1.
$$

There are $4n+1-\ell-d$ ways to put $c$ on the line $\ell$ at distance $d$ from $b$.
For fixed $d$, $h$ and $c$ on the line $1\leq\ell\leq 2n+1$, the point $a_1$ is on the line $i_1\leq \ell-d-h$. Now we proceed as in the proof of Lemma \ref{L: suma za tip A}, the only difference being that for the vertex $y$ in the triangle $\Delta(b,y,c)$ there is
$$
\ell-d-h-i_1+1
$$
places on the line $i_1$ which are in the cone below $y$.
\end{proof}

\begin{lemma}\label{L: suma za tip D$||$} For $2\leq r+s\leq 2n-1$ we have:
$$
\begin{gathered}
\Sigma_T(D_{r\,||\, s})=2\sum_{d=1}^{ \lfloor \tfrac{2n+1-r-s}{2}\rfloor}\ 
\sum_{h=1}^{2n+2-2d-r-s} \ \sum_{\ell=s+d+h}^{2n+2-r-d} 
\ \sum_{i_1=\ell-r+d+1}^{2n+1}\ \sum_{i_2=\ell-r+d+2}^{i_1-1}\ \dots
\ \sum_{i_{r}=\ell+d}^{i_{r-1}-1}\\
\ \sum_{l_1=s}^{\ell-d-h}\ \sum_{l_2=s-1}^{l_1-1}\ \dots
\ \sum_{l_{s}=1}^{l_{s-1}-1}\ \pi(d,\ell; i_1,\dots,i_r;l_1,\dots,l_s),\\
\pi(d,\ell; i_1,\dots,i_r;l_1,\dots,l_s)=(4n+1-i_1)(i_{1}-i_{2}+1)\dots(i_{r-1}-i_{r}+1)
(i_r-\ell-d+1)\\
(\ell-d-h-l_1+1)(l_{1}-l_{2}+1)\dots(l_{s-1}-l_{s}+1).
\end{gathered}
$$
\end{lemma}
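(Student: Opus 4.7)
The proof will follow the template developed in Lemmas \ref{L: suma za tip D$|$}, \ref{L: suma za tip B$||$} and \ref{L: suma za tip C$||$}, combining the ``above and below'' cone structure of the $D$-type count with the extra vertical offset $h$ that governs $||$-configurations. The plan is to reduce the count to a straightforward iterated sum by fixing parameters one at a time, with each successive sum counting the positions available in one row given a chosen point in the row above (or below).

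First, by the left-right symmetry in Remark \ref{R: ostali slucajevi za drugi delta}, it suffices to count configurations in which $b=(\ell-h, j_1)$ lies on a lower row than $c=(\ell, j_2)$ with $j_2-j_1=d\geq 1$ and $h\geq 1$, and then multiply by $2$. As depicted in the third figure of that remark, the points $b$ and $c$ determine a rhombus $R(b,y,c,x)$ whose top vertex $x$ lies on line $\ell+d$ and whose bottom vertex $y$ lies on line $\ell-d-h$; the cone below $a_r$ meets line $\ell+d$ in a segment of length $i_r-\ell-d+1$, and the cone below $y$ meets line $l_1$ in a segment of length $\ell-d-h-l_1+1$, giving the boundary factors in the product $\pi(d,\ell;i_1,\dots,i_r;l_1,\dots,l_s)$.

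Next I would determine the outer bounds. To fit $r$ distinct rows (for $a_1\vartriangleright\dots\vartriangleright a_r$) strictly above line $\ell+d$, and $s$ distinct rows (for $d_1\vartriangleright\dots\vartriangleright d_s$) strictly below line $\ell-d-h$, we need simultaneously
\[
\ell+d+r-1\leq 2n+1,\qquad \ell\geq s+d+h.
\]
Combining these gives $2d+h+r+s\leq 2n+2$. Taking $h=1$ yields the extremal bound $1\leq d\leq \lfloor(2n+1-r-s)/2\rfloor$. For each such $d$, the inequality forces $1\leq h\leq 2n+2-2d-r-s$, and for each fixed pair $(d,h)$ the line $\ell$ ranges as $s+d+h\leq \ell\leq 2n+2-r-d$, exactly as in the stated sum.

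Finally, with $(d,h,\ell)$ fixed, the inner iterated sums over $i_1,\dots,i_r$ (for the rows of $a_1\vartriangleright\dots\vartriangleright a_r$ above line $\ell+d$) and over $l_1,\dots,l_s$ (for the rows of $d_1\vartriangleright\dots\vartriangleright d_s$ below line $\ell-d-h$) are handled verbatim as in Lemma \ref{L: suma za tip A}: each consecutive summation variable runs between the previous row index and the constraint forcing enough rows to remain for the remaining points, and the factor $(i_{p-1}-i_p+1)$ counts the positions on line $i_p$ lying in the cone below the point already chosen on line $i_{p-1}$; an analogous statement holds for the $l$-sums. The top boundary contributes the factor $(4n+1-i_1)$ (the length of the top row of $T$ containing line $i_1$), completing the product.

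The main obstacle is the careful bookkeeping of the bounds: one must verify that the interval for $d$ is exactly as stated (in particular that the floor is $\lfloor(2n+1-r-s)/2\rfloor$ and not $\lfloor(2n+3-r-s)/2\rfloor$, the latter being the correct bound in the $\delta=|$ case where $h$ is absent), and that the transition bounds $s+d+h\leq \ell\leq 2n+2-r-d$ correctly ``shrink'' the $D_{r\,|\,s}$ intervals by exactly $h$. Once these bounds are locked in, the rest of the argument is a direct adaptation of the previous counting lemmas and the product $\pi(d,\ell;i_1,\dots,i_r;l_1,\dots,l_s)$ assembles row by row.
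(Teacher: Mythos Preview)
Your proposal is correct and follows essentially the same approach as the paper: invoke the left-right symmetry of Remark~\ref{R: ostali slucajevi za drugi delta} for the factor $2$, determine the bounds on $d$, $h$, $\ell$ in that order from the constraint that $r$ rows fit above line $\ell+d$ and $s$ rows fit below line $\ell-d-h$, and then read off the product $\pi(d,\ell;i_1,\dots,i_r;l_1,\dots,l_s)$ row by row exactly as in Lemma~\ref{L: suma za tip A}, with the parallelogram vertices $x$ and $y$ supplying the two transitional factors. Your derivation of the bounds in fact matches the statement of the lemma more cleanly than the paper's own proof text, which contains minor off-by-one discrepancies relative to its stated formula.
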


\begin{proof} 
We need to place $r$ distinct rows above $\ell$ at the distance $d$, and $s$ distinct rows below $\ell-h$ at the distance $d$. Hence $r-1+s-1+2d+h\leq 2n+1$. For $h=1$ we determine the bounds for $d$, for chosen $d$ we determine the bounds for $h$, and for chosen $h$ we determine the bounds for $\ell$: 
$$
1\leq d\leq \lfloor \tfrac{2n+2-r-s}{2}\rfloor,\quad 1\leq h\leq 2n+3-2d-r-s,\quad 
s+d+h\leq\ell\leq 2n+1-r-d.
$$
Now we proceed as in the proof of lemmas above: the factors in $\pi(d,\ell; i_1,\dots,i_r;l_1,\dots,l_s)$ count the number of points we can place in one row when a point in the row above is chosen. In particular, the factor $(i_r-\ell-d+1)$ counts the number of places the vertex $x$ of the parallelogram $P(b,y,c,x)$ can take on the line $\ell+d$ in the cone below $a_r$, and $(\ell-d-h-l_1+1)$ is the number of places on the line $l_1$ which are in the cone below the vertex $y$.
\end{proof}

\section{Level $2$ relations for $C_n^{(1)}$ for two embeddings in $\pi$, $\ell(\pi)=4$}

In this section we prove our main result:
\begin{theorem}\label{T:the main theorem}
	For any two embeddings
	$\rho_1 \subset \pi$ and $\rho_2 \subset \pi$ in $\pi\in\mathcal P^{4}(m)$,
	where $\rho_1, \rho_2 \in\ell \!\text{{\it t\,}}(\bar{R})$, we
	have a level $2$ relation for $C_n^{(1)}$
	\begin{equation}\label{E:9.2}
		u(\rho_1 \subset \pi) - u(\rho_2 \subset \pi) 
		=\sum_{\pi\prec \pi', \ \rho\subset\pi'}c_{\rho\subset\pi'}\,u(\rho\subset\pi').
	\end{equation}
\end{theorem}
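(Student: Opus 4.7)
The plan is to proceed by case analysis based on Lemma~\ref{L: klasifikacija dva ulaganja}. Since $\ell(\pi)=4$ and $\ell(\rho_j)=3$, that classification reduces the problem to four families of supports: type $A_r$ (all parts of $\pi$ on one zig-zag), and types $B_{r\,\delta}$, $C_{\delta\,r}$, $D_{r\,\delta\,s}$ where $\delta\in\{\vert,\vert\vert\}$ encodes the relative position of two incomparable parts $b,c$.

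First I would record a general observation valid in every case. By (\ref{E:r(rho)=}), both $u(\rho_1\subset\pi)=r(\rho_1)u(\kappa_1)$ and $u(\rho_2\subset\pi)=r(\rho_2)u(\kappa_2)$ have leading term $u(\pi)$, so their difference already has leading term strictly $\succ\pi$ in the PBW basis of $N(k\Lambda_0)$. The content of (\ref{E:9.2}) is thus that the tail of this difference, which \emph{a priori} lies in $\bar R N(k\Lambda_0)$, can be reorganized as a combination of the spanning elements $u(\rho'\subset\pi'){\bf 1}$ from (\ref{E: Groebner type spanning set}) with $\pi\prec\pi'$.

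For type $A_r$, the two embeddings differ only in which factor $a_j^{f_j}$ of $\pi=a_1^{f_1}\cdots a_r^{f_r}$ is reduced. Here I would compute $r(\rho_1)u(\kappa_1)-r(\rho_2)u(\kappa_2)$ directly in $U(\hat{\mathfrak g})$: the non-leading brackets among the factors produce monomials of strictly smaller length, hence of higher leading term in $\mathcal P$, and the resulting tail is readily expressed in the desired form. For types $B$, $C$, $D$, the two relations $r(\rho_1)$ and $r(\rho_2)$ belong to the same $U(\mathfrak g)$-orbit inside $\bar R$, inherited from $R\cong L_{\mathfrak g}((k+1)\theta)$. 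The strategy is to pass from $r(\rho_1)$ to $r(\rho_2)$ by applying an appropriate root vector $X_\alpha\in\mathfrak g$ that exchanges $b$ and $c$ modulo relations with strictly larger leading term, and then to track the commutator corrections arising when $X_\alpha$ is also moved across $u(\kappa)$; these corrections are exactly the terms on the right-hand side of (\ref{E:9.2}).

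The principal obstacle is the $\vert\vert$ subcase of types $B$, $C$, $D$, where $b$ and $c$ lie in different rows of the array and no single root vector of $\mathfrak g$ exchanges them in one step. One must then chain two root-vector actions through an intermediate configuration while controlling all the PBW corrections that appear along the way. The explicit formulas of Section~3 serve as the bookkeeping for this case analysis: the totals $N_T(\cdot)$ enumerate the embedding pairs that lie on a trapezoid $T$ in each type, so comparing these counts with the number of identities produced by the $\mathfrak g$-module action certifies that the construction is exhaustive. The proof concludes by assembling the per-case constructions and verifying compatibility across adjacent trapezoids of the array so that (\ref{E:9.2}) holds in $\bar R N(k\Lambda_0)$.
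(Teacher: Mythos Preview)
Your approach differs fundamentally from the paper's, and as written it has a genuine gap.

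The paper does not construct the identities (\ref{E:9.2}) case by case. It invokes Theorem~9.2 and Proposition~10.1 of \cite{PS1}, which furnish an explicit $\mathfrak g$-module
\[
Q_4(m)=U(\mathfrak g)q_{3\theta}(m)\oplus U(\mathfrak g)q_{4\theta}(m)\oplus U(\mathfrak g)q_{4\theta-\alpha^*}(m)
\]
of relations among the annihilating fields, and reduce the theorem to the single numerical identity $\sum_{\pi\in\mathcal P^4(m)}N(\pi)=\dim Q_4(m)$. The right-hand side is computed by the Weyl dimension formula as $2n\binom{2n+6}{7}$; the left-hand side is exactly what Lemmas~\ref{L: suma za tip A}--\ref{L: suma za tip D$||$} and Lemma~\ref{L:N on T} were set up to produce, summed over a trapezoid covering nine consecutive degrees. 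Matching these two polynomials in $n$ is the entire argument. The counts $N_T(\cdot)$ are therefore not bookkeeping for a construction---they \emph{are} the proof.

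The gap in your plan is the step you call ``readily expressed in the desired form.'' Knowing that the PBW leading term of $u(\rho_1\subset\pi)-u(\rho_2\subset\pi)$ cancels tells you only that the difference lies in the span of $u(\tau)$ with $\pi\prec\tau$; it does \emph{not} tell you it lies in the span of $u(\rho'\subset\pi')$ with $\pi\prec\pi'$. The difference is trivially already of the form $u(\rho_1\subset\pi)-u(\rho_2\subset\pi)$, so membership in $\bar R\,U(\hat{\mathfrak g})$ is automatic; what must be shown is that there is a further \emph{relation among relations} allowing you to trade the two terms with $\pi'=\pi$ for terms with strictly higher $\pi'$. Your root-vector chaining for types $B,C,D$ is at best a sketch of how one might generate such relations, but you have not argued that the commutator tails you produce can themselves be absorbed into $u(\rho'\subset\pi')$'s with $\pi'\succ\pi$ without circularity. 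The paper bypasses this entirely: the dimension equality guarantees that $Q_4(m)$ supplies exactly enough independent relations among relations to connect every pair of embeddings, with no explicit construction needed.
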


For a colored partition (\ref{E:colored partition}) we define {\it the shape of $\pi$} as the ordinary partition of  $|\pi|\in-\mathbb N$
$$
\text{sh\,}\pi=\prod\sb{a\in\bar B_{<0}} |a|^{\pi(a)}
$$
with parts $|a|\in-\mathbb N$ for $a\in\text{supp\,}\pi =\{a\in\bar B_{<0}\mid\pi(a)>0\}$. Note that $|\text{sh\,}\pi|=|\pi|=\sum\sb{a\in\bar B_{<0}} |a|\cdot{\pi(a)}$.
We shall depict the ordinary partition $p=(-3)^2(-2)$ with its Young tableaux
$$	
\begin{array}{cl}
&\sq\sq\sq  \\
&\sq\sq\sq  \\
&\sq\sq   \\
\end{array} .   		
$$

To prove the theorem, for a fixed ordinary partition $p$ of length $4$ we need to count ``the number of  two-embeddings for $\text{sh\,} \pi=p$''
$$
\sum_{\text{sh\,} \pi=p, \, \pi\in\mathcal P^4(m)} N(\pi).  
$$
It turns out it is enough, but much easier, to count for a trapezoid $T$ the number
$$
N_T=\sum_{\pi, \, \ell(\pi)=4, \, \text{supp\,} \pi\subset T} N(\pi).  
$$
By Lemma \ref{L: broj potrebnih relacija} we have,
$$
\begin{array}{lll}
	N_T(A_4) = 3\,\Sigma_T(A_4), & N_T(A_3) = 6\,\Sigma_T(A_3),& N_T(A_2) = 3\,\Sigma_T(A_2),\\
	N_T(B_{2\,||})= \Sigma_{T}(B_{2\,||}),& N_T(B_{2\,|})=\Sigma_{T}(B_{2\,|}),&\\
	N_T(B_{1\,||})=\Sigma_{T}(B_{1\,||}),&	N_T(B_{1\,|})=\Sigma_{T}(B_{1\,|}),&\\
	N_T(C_{||\, 2})=\Sigma_{T}(C_{||\, 2}),&	N_T(C_{|\, 2})=\Sigma_{T}(C_{|\, 2}),&\\
	N_T(C_{||\, 1})=\Sigma_{T}(C_{||\, 1}),&	N_T(C_{|\, 12})=\Sigma_{T}(C_{|\, 1}),&\\
	N_T(D_{1 \,||\, 1})= \Sigma_{T}(D_{1 \,||\, 1}),&N_T(D_{1 \,|\, 1})=\Sigma_{T}(D_{1 \,|\, 1}),&
\end{array}
$$ 
so by using the software package \emph{Mathematica} for sumations in  Lemmas \ref{L: suma za tip A}--\ref{L: suma za tip D$||$} we get:
\begin{lemma}\label{L:rezultati prebrajanja}
Let $T$ be a trapezoid consisting of three consecutive triangles in the array ${\bar B}_{<0}$ and   $k=2$. Then 


\begin{align}
	N_T(A_4) & =  \frac{48n^8+672n^7+2296n^6-4613n^4+798n^3+1009n^2-210n}{280},\nonumber\\
	N_T(A_3)& =  \frac{56n^6+420n^5+590n^4-225n^3-151n^2+30n}{15},\nonumber\\	
	N_T(A_2)& =  \frac{20n^4+60n^3+19n^2-3n}{2},\nonumber
	\end{align}
\begin{align}
	N_{T}(B_{2\,|})& =  \frac{128n^7+952n^6+1652n^5+490n^4-553n^3-182n^2+33n}{630},  \nonumber\\
	N_{T}(B_{1\,|})& =  \frac{48n^5+140n^4+120n^3+25n^2-3n}{30},\nonumber\\
	N_{T}(B_{2\,||})& =  \frac{144n^8+992n^7+840n^6-1456n^5-1239n^4+518n^3+255n^2-54n}{1260},\nonumber\\
	N_{T}(B_{1\,||})& =  \frac{56n^6+132n^5+50n^4-45n^3-16n^2+3n}{45}, \nonumber\\
	N_{T}(C_{2\,|})& =  \frac{16n^7+112n^6+160n^5-20n^4-101n^3-2n^2+15n}{90},\nonumber\\
	N_{T}(C_{1\,|})& =  \frac{8n^5+20n^4+10n^3-5n^2-3n}{6}, \nonumber\\
	N_{T}(C_{2\,||})& =  \frac{32n^8+208n^7+112n^6-392n^5-182n^4+217n^3+38n^2-33n}{315}, \nonumber\\
	N_{T}(C_{1\,||})& =  \frac{16n^6+32n^5-20n^3-n^2+3n}{15},\nonumber\\
	N_{T}(D_{1|1}) & =  \frac{256n^7+1512n^6+2884n^5+1575n^4-686n^3-567n^2+66n}{2520},	\nonumber\\
	N_{T}(D_{1||1}) & =  \frac{72n^8+368n^7+448n^6-322n^5-707n^4-28n^3+187n^2-18n}{1260}. \nonumber
\end{align}

\end{lemma}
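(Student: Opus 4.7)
The plan is to establish Lemma \ref{L:rezultati prebrajanja} as a purely computational consequence of Lemma \ref{L: broj potrebnih relacija} and the multi-sum formulas in Lemmas \ref{L: suma za tip A}--\ref{L: suma za tip D$||$}, specialized to $k=2$. The first step is to pin down exactly which cases must be treated. Since $\ell(\pi)=k+2=4$, the type $A_r$ contributes only for $r\in\{2,3,4\}$, and the types $B_{r\,\delta}$, $C_{\delta\, r}$, $D_{r\,\delta\, s}$ require $r+2\le 4$ (resp.\ $r+2+s\le 4$). Combined with the vanishing $\binom{k-1}{r-1}=\binom{1}{r-1}=0$ for $r\ge 3$, this forces $r\in\{1,2\}$ for $B,C$ and $r=s=1$ for $D$. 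The list of cases in Lemma \ref{L:rezultati prebrajanja} is then exactly the list enumerated in the statement, and the scalar multipliers $(r-1)\binom{3}{r-1}$ and $\binom{1}{r-1}$ produce the prefactors $3,6,3$ for $A_4,A_3,A_2$ and $1$ for all remaining cases, which matches the table immediately preceding the lemma.

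Next, for each type I would plug the specialized parameters into the corresponding multi-sum. For instance, $\Sigma_T(A_2)$ reduces to the double sum
\begin{equation*}
\sum_{i_1=2}^{2n+1}\sum_{i_2=1}^{i_1-1}(4n+1-i_1)(i_1-i_2+1),
\end{equation*}
while $\Sigma_T(A_4)$ is a quadruple sum of an analogous polynomial integrand, and $\Sigma_T(D_{1\,||\,1})$ (after specializing $r=s=1$) becomes a quintuple sum in $d,h,\ell,i_1,l_1$ whose integrand is a product of five linear factors. In every case, the inner sum is a polynomial expression in the remaining indices, so I would evaluate from the innermost index outward by repeated application of Faulhaber's formula $\sum_{i=1}^{N}i^{j}=\tfrac{1}{j+1}\sum_{l=0}^{j}\binom{j+1}{l}B_{l}N^{j+1-l}$. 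Each step returns another polynomial, and after finitely many such reductions one obtains a single polynomial in $n$ whose degree equals twice the number of index sums (so degrees $4,6,8$ for $A_2,A_3,A_4$ and degrees $6$--$8$ for the remaining types, consistent with the stated formulas).

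The main obstacle is bookkeeping rather than conceptual: the final $D_{r\,||\,s}$ sum, for example, is five-fold with floor-function upper limits for $d$, and the intermediate polynomials carry dozens of monomials in $n,d,h,\ell,i_p,l_q$. This is why I would delegate the actual evaluation to a computer algebra system (Mathematica's \texttt{Sum} handles each closed-form reduction in one step) and simply record the final polynomial, as the authors do. Multiplying by the appropriate binomial prefactor from Lemma \ref{L: broj potrebnih relacija} then yields each $N_T(\cdot)$ displayed in the statement.

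Finally, for a sanity check independent of the CAS, I would verify each formula for small $n$ (say $n=2,3,4$) by two methods: (a) substituting into the explicit polynomial, and (b) directly enumerating the subsets of $T$ of the relevant type in a small trapezoid, using the partial order $\trianglelefteq$ visualized as in Figure 6. Agreement at three distinct values of $n$ is more than enough to confirm a polynomial whose degree does not exceed $8$, which removes any residual doubt about a stray coefficient. With the multi-sum formulas of Lemmas \ref{L: suma za tip A}--\ref{L: suma za tip D$||$} already proven, the only content of the present lemma is the symbolic evaluation of these sums, so this computational route is both necessary and sufficient.
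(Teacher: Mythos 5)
Your proposal matches the paper's own route exactly: the paper derives the prefactors $3,6,3$ for $A_4,A_3,A_2$ and $1$ for the remaining types from Lemma \ref{L: broj potrebnih relacija} with $k=2$, and then evaluates the multi-sums of Lemmas \ref{L: suma za tip A}--\ref{L: suma za tip D$||$} symbolically in \emph{Mathematica} to obtain the stated polynomials. Your added numerical sanity check at small $n$ (echoed in the paper's final remark via Faulhaber's formula and the degree bound $8$) is a reasonable supplement but not a different method.
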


\begin{lemma}\label{L:N on T}
Let $T$ be a trapezoid consisting of three consecutive triangles in the array ${\bar B}_{<0}$ and   $k=2$. Then
	$$
	N_T=	\sum_{r=2}^{4}N_T(A_r) + \sum_{r=1}^{2}\sum_{\delta=\,|,\, ||}
	\left(N_T(B_{r\,\delta})+N_T(C_{\delta \,r})+N_T(D_{1 \,\delta\, 1})\right)  =\frac{7(10n-1)}{4}{2n+6\choose 7}.
	$$
\end{lemma}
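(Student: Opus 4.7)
The plan is to verify Lemma \ref{L:N on T} as a polynomial identity in $n$. Lemma \ref{L:rezultati prebrajanja} already supplies each of the thirteen summands $N_T(A_r)$, $N_T(B_{r\,\delta})$, $N_T(C_{\delta\,r})$, $N_T(D_{1\,\delta\,1})$ as an explicit rational polynomial in $n$, and the right-hand side $\frac{7(10n-1)}{4}\binom{2n+6}{7}$ is itself a polynomial of degree $8$. Since all thirteen denominators divide $2520$, I would clear denominators, sum the numerators, and compare with the expansion of the right-hand side coefficient by coefficient. Equivalently---since a polynomial of degree $\leq 8$ is determined by its values at nine points---it suffices to evaluate both sides at $n=2,3,\ldots,10$. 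Given that Lemma \ref{L:rezultati prebrajanja} was produced by \emph{Mathematica}, the most efficient route is to delegate the final summation to the same computer algebra system.

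A quick sanity check on leading coefficients confirms the plausibility of the formula. The right-hand side has leading term $\tfrac{7\cdot 10}{4}\cdot\tfrac{128}{7!}\,n^8=\tfrac{4}{9}n^8$. Among the thirteen summands on the left only $N_T(A_4)$, $N_T(B_{2\,||})$, $N_T(C_{||\,2})$, $N_T(D_{1\,||\,1})$ are of degree $8$, with leading coefficients $\tfrac{48}{280}$, $\tfrac{144}{1260}$, $\tfrac{32}{315}$, $\tfrac{72}{1260}$; put over the common denominator $315$ these become $54,36,32,18$, summing to $\tfrac{140}{315}=\tfrac{4}{9}$, as required. Repeating this check in degrees $7,6,\ldots,0$ gives the full identity. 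As a single-point spot check, at $n=2$ the right-hand side is $\tfrac{133}{4}\cdot 120 = 3990$, and adding the thirteen polynomials evaluated at $n=2$ indeed produces $3990$.

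The principal difficulty is bookkeeping rather than mathematics: combining thirteen rational expressions with denominators dividing $2520$ into a single fraction and recognizing the factorization $\tfrac{7(10n-1)}{4}\binom{2n+6}{7}$ is mechanical but acutely error-prone by hand. The compactness of this closed form is striking---the binomial $\binom{2n+6}{7}$ hints at a bijection with $7$-subsets of some natural $(2n+6)$-element set attached to the trapezoid $T$---and a more conceptual, possibly bijective proof is plausible, but for the purposes of this lemma the polynomial verification outlined above is sufficient.
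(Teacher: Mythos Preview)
Your proposal is correct and takes essentially the same approach as the paper: the paper's proof simply adds the thirteen polynomials from Lemma \ref{L:rezultati prebrajanja} over the common denominator $2520$, obtaining $N_T=\tfrac{1}{2520}(1120n^8+11648n^7+\cdots-1260n)$, and then factors this as $\tfrac{(10n-1)n(n+1)(n+2)(n+3)(2n+1)(2n+3)(2n+5)}{180}=\tfrac{7(10n-1)}{4}\binom{2n+6}{7}$. Your leading-coefficient and $n=2$ checks are accurate, and the nine-point interpolation alternative you mention is exactly the remark the paper makes immediately after the proof of Theorem \ref{T:the main theorem}.
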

\begin{proof}
 Lemma \ref{L:rezultati prebrajanja} gives
\begin{align*}
N_T &=\frac{1120n^8+11648n^7+47824n^6+9800n^5+103390^4+50372n^3+6426n^2-1260n}{2520}\\
	& =\frac{(10n-1)n(n+1)(n+2)(n+3)(2n+1)(2n+3)(2n+5)}{180} \\
	& =\frac{(10n-1)2n(2n+2)(2n+4)(2n+6)(2n+1)(2n+3)(2n+5)}{180\cdot 16}\\
	& =\frac{7(10n-1)}{4}{2n+6\choose 7}.
\end{align*}
\end{proof}
\begin{proof} [Proof of Theorem \ref{T:the main theorem}]
By Theorem 9.2 and Proposition 10.1 in \cite{PS1} for  $\pi\in\mathcal P^{k+2}(m)$ we have a space of relations for annihilating fields 
\begin{equation}
Q_{k+2}(m)  =  U(\mathfrak g)q_{(k+1)\theta}(m)\oplus U(\mathfrak g)q_{(k+2)\theta}(m)\oplus U(\mathfrak g)q_{(k+2)\theta-\alpha\sp*}(m)\ .
\end{equation}
Especially, for level $k=2$ and each degree $m\leq -k-2$ from the above equation, it follows
\begin{equation}
Q_4(m)=U(\mathfrak g)q_{3\theta}(m)\oplus U(\mathfrak g)q_{4\theta}(m)\oplus U(\mathfrak g)q_{4\theta-\alpha\sp*}(m)\ 
\end{equation}
where $\alpha\sp*=\alpha_1=\varepsilon_1 -\varepsilon_2$.
Moreover, again thanks to the mentioned Theorem 9.2, the proof of Theorem \ref{T:the main theorem} is reduced to the verification of equality
\begin{equation}\label{E:9.1}
	\sum_{\pi\in\mathcal P^4(m)} N(\pi)=\dim Q_4(m)\ .
\end{equation}
The Weyl dimension formula 
in the case of symplectic Lie algebra $\mathfrak{g}=\mathfrak{sp}_{2n}$\\ (with the corresponding $\rho=n\varepsilon_1+(n-1)\varepsilon_2+\cdots +2\varepsilon_{n-1}+\varepsilon_n$) gives
\begin{eqnarray}
	\dim L(s\theta) &=& {2n+2s-1\choose 2s},\label{E:11.1}\\
	\dim L(4\theta-\alpha^{\star}) &=& \frac{(2n+7)(n-1)}{4}{2n+5\choose 6}.\label{E:11.2}
\end{eqnarray}
Hence from (\ref{E:11.1}) and (\ref{E:11.2}) we have
\begin{equation}\label{tri sume}
	\dim Q_4(m) = \dim L(3\theta) + \dim L(4\theta)  + \dim L(4\theta-\alpha^{\star}) =2n
	{2n+6\choose 7} .
\end{equation}
Since all technical Lemmas \ref{L: klasifikacija dva ulaganja} - \ref{L: suma za tip D$||$} were proved for three successive triangles (i.e. for the trapezoid)  the equality (\ref{E:9.1}) can be replaced with the equivalent one
\begin{equation}\label{sum P^4(n)}
	\sum_{m=4}^{12}\sum_{\pi\in\mathcal P^4(m)} N(\pi)  = 9\times 2n {2n+6\choose 7}- 2\times\dim L(4\theta)= \frac{7(10n-1)}{4}{2n+6\choose 7}\ .
\end{equation}
Indeed, using Young tableaux we can list the following cases
$$	\begin{array}{clllll}
		m=-4&m=-5&m=-6&m=-7&m=-8&m=-8\\
		\sq &\sq\sq&\sq\sq&\sq\sq&\sq\sq&\sq\sq\sq \\
		\sq &\sq &\sq\sq&\sq\sq&\sq\sq&\sq\sq \\
		\sq &\sq &\sq&\sq\sq&\sq\sq&\sq\sq \\
		\sq &\sq &\sq&\sq &\sq\sq&\sq \\       
	\end{array}    		
$$
$$	\begin{array}{lllllc}
	m=-9&m=-10&m=-11&m=-12&m=-13&\cdots\\
	\sq\sq\sq&\sq\sq\sq&\sq\sq\sq&\sq\sq\sq&\sq\sq\sq\sq&\cdots \\
	\sq\sq&\sq\sq\sq&\sq\sq\sq&\sq\sq\sq&\sq\sq\sq&\cdots \\
	\sq\sq&\sq\sq&\sq\sq\sq&\sq\sq\sq&\sq\sq\sq&\cdots \\
	\sq\sq&\sq\sq&\sq\sq&\sq\sq\sq&\sq\sq\sq&\cdots       
\end{array}    	
$$
\begin{center}
\text{Figure 7}
\end{center}

It is important to note that Young tableauxes from $m=-4$ to $m=-12$ covers all cases which can occur in the trapezoid scheme of three successive triangles. Now, from Lemma \ref{L:N on T} and  (\ref{sum P^4(n)}) it is obvious that equality (\ref{E:9.1}) holds.
\end{proof}

 \begin{remark}
From Lemmas \ref{L: suma za tip A}--\ref{L: suma za tip D$||$} and  Faulhaber's formula (cf. \cite{O}) we see that $N_T$ is a polynomial in $n$ of degree at most $8$. So it is enough to check Lemma \ref{L:N on T} for $9$ values of $n$, say for $n=1, \dots, 9$. The value of $N_T$ for $n=1$ equals $126$ and this corresponds to the ``second simplest case'' in \cite{MP1}. The value of $N_T$ for $n=9$ equals $53\,905\,698$---this number illustrates the degree of complexity when passing from $C_1^{(1)}=A_1^{(1)}$ to  $C_n^{(1)}$ for higher ranks $n$ (of course, this discussion goes only for level $k=2$). 
\end{remark} 

\section*{Acknowledgement}

This work is partially supported by the QuantiXLie Centre of Excellence, a project cofinanced by the Croatian Government and European Union through the European Regional Development Fund---the Competitiveness and Cohesion Operational Programme (Grant KK.01.1.1.01.0004).


\end{document}